\documentclass[final,onefignum,onetabnum,dvipsnames]{siamart251216}
\usepackage{amsmath, amssymb, mathtools}
\usepackage[absolute,overlay]{textpos}
\usepackage{graphicx}

\usepackage[nameinlink]{cleveref}
\usepackage{algorithm}
\usepackage{algorithmic}
\numberwithin{algorithm}{section}
\usepackage{multirow}
\usepackage{comment}
\usepackage{pifont}

\numberwithin{equation}{section}
\newsiamremark{remark}{Remark}
\newsiamthm{property}{Property}
\crefname{theorem}{Theorem}{Theorems}
\crefname{corollary}{Corollary}{Corollaries}
\crefname{subsection}{section}{sections}
\Crefname{subsection}{Section}{Sections}
\crefname{algorithm}{Algorithm}{Algorithms}
\crefname{table}{Table}{Tables}

\AtBeginDocument{%
  \apptocmd{\appendix}{%
    \crefalias{section}{appendix}%
  }{}{}
}

\crefname{property}{Property}{Properties}
\Crefname{property}{Property}{Properties}
\crefname{figure}{Figure}{Figures}
\crefname{appendix}{Appendix}{Appendices}

\newcommand{\vmark}{\textcolor{Green}{\ding{51}}}%
\newcommand{\xmark}{\textcolor{Red}{\ding{55}}}%
\newcommand{\nmark}{{\textcolor{Orange}{\ \ding{169}}}}
\newcommand{\red}[1]{\textcolor{Red}{#1}}
\newcommand{\orange}[1]{\textcolor{Orange}{#1}}

\newcommand{\new}[1]{#1}

\DeclareMathOperator*{\argmin}{\mathrm{argmin}}

\headers{Halving the size of skew-symmetric eigenvalue problems}{Daniel Kressner and Simon Mataigne}

\title{Halving the size of skew-symmetric eigenvalue problems via the polar decomposition\thanks{Submitted to the editors.\funding{Simon Mataigne is a Research fellow of the Fonds de la Recherche Scientifique - FNRS. This work was supported by the Fonds de la Recherche Scientifique - FNRS under Grant no T.0001.23}}}

\author{Daniel Kressner\thanks{Institute of Mathematics, EPFL, Lausanne, Switzerland (\email{daniel.kressner@epfl.ch})}
\and Simon Mataigne\thanks{ICTEAM Institute, UCLouvain, Louvain-la-Neuve, Belgium (\email{simon.mataigne@uclouvain.be)}}}

\usepackage{amsopn}
\DeclareMathOperator{\diag}{diag}

\ifpdf
\hypersetup{
  pdftitle={Halving the size of skew-symmetric eigenvalue problems},
  pdfauthor={Daniel Kressner and Simon Mataigne}
}
\fi

\begin{document}

\maketitle

\begin{abstract}
This paper introduces a novel algorithm for computing eigenvalues and eigenvectors of a dense real skew-symmetric matrix $A$. Its main ingredient is the computation of a polar factor of $A$ that is both skew-symmetric and orthogonal. This polar factor is then used to transform the original problem into a Hermitian eigenvalue problem of half the size, which can be solved accurately and efficiently with standard software such as LAPACK. Numerical experiments demonstrate the stability of the method and show that its running time is competitive with existing approaches for skew-symmetric eigenvalue problems. Finally, we show that the same principle can be used to reduce 
an orthogonal eigenvalue problem to a unitary eigenvalue problem of half the size.
\end{abstract}
\begin{keyword}
Skew-symmetric matrices, orthogonal matrices, eigenvalue problem, Hamiltonian matrices, symplectic matrices.
\end{keyword}
\begin{MSCcodes}
15A18, 15A20, 65F15, 15B30, 15B57, 15B10.
\end{MSCcodes}

\section{Introduction}\label{sec:introduction}

Computing an eigenvalue decomposition of a real skew-sym\-metric matrix ($A=-A^\top$) is a fundamental problem in linear algebra. While the
skew-symmetric eigenvalue problem (EVP) is less ubiquitous than its symmetric
counterpart, it arises naturally in a number of applications in
science and engineering. For example, skew-symmetric matrices and EVPs play an important role in computational physics~\cite{Penke20}, especially in connection with conservation laws~\cite{DucrosEtAl2000,Gassner2013,SvardNordstrom2014,Tadmor1984}, and in geometric numerical integration~\cite[Chap.~IV]{HairerLubichWanner2006}. Skew-symmetric matrices are also fundamental in Riemannian statistics~\cite{KrakowskiHuperManton2008,Pennec2006} and Riemannian  optimization~\cite{AbsilMahonySepulchre2008}, where they appear prominently in the tangent spaces of the special orthogonal group, the Stiefel manifold, and the Grassmann manifold~\cite{BendokatZimmermannAbsil2024,ZimmermannHuper2022}. Moreover, they find applications in graph theory through the spectral analysis of directed graphs~\cite{CaversEtAl2012,GuoMohar2017,WongMaTian2016}. Finally, as shown in~\cite{mataigne2026jacobi,mataigne2026eigenvalue}, the spectral decomposition of the skew-symmetric part of a real normal matrix can be used to compute its eigenvalue decomposition, in particular when the matrix is orthogonal.
Despite the numerous applications mentioned above, algorithms and implementations for skew-symmetric EVPs are comparably less developed.

The classical approach for skew-symmetric EVPs reduces the matrix by
orthogonal Householder transformations to skew-symmetric tridiagonal
form~\cite{Wilkinson62,MR501120}. The reduced problem can be treated
as a bidiagonal singular value problem~\cite{WardGray78}, or transformed
by a diagonal unitary similarity into a real symmetric tridiagonal
EVP, which partly ignores the skew-symmetric structure. The latter approach was implemented on distributed-memory systems
by Shao et al.~\cite{MR3419779}. More recently, Penke
et al.~\cite{Penke20} developed such a solver for ELPA~\cite{ELPA}, including
an adaptation of successive band reduction~\cite{MR1941085}
for performing skew-symmetric tridiagonalization. In passing, we note that 
Jacobi-like algorithms for skew-symmetric have also been proposed and analyzed in~\cite{MR1227767,Paardekooper1971}.

Dedicated, publicly available software for real skew-symmetric eigenvalue problems remains uncommon. In particular, widely used general-purpose libraries such as LAPACK~\cite{lapack99}, as well as the major GPU-oriented libraries cuSOLVER~\cite{NVIDIAcuSOLVER} and MAGMA~\cite{Tomov2010MAGMA} provide functionality for general and symmetric/Hermitian EVPs, but no dedicated skew-symmetric eigenvalue solver. Notable occurrences in specialized libraries include the ELPA solver from~\cite{Penke20}, the ScaLAPACK-style solver in BSEPACK from~\cite{MR3419779}, as well as the Julia package SkewLinearAlgebra.jl. 

Because of the limited availability of specialized software, a common workaround is to exploit the skew-symmetric structure only partially and apply, for example, a Hermitian eigensolver to $\mathrm{i} A$. Since this converts a real EVP into a complex EVP of the \emph{same} size, it can be considerably more expensive.

\paragraph{Contributions}
This work proposes a new approach that fully leverages the structure of a skew-symmetric EVP by reducing it to a Hermitian EVP of \emph{half} the size. For this purpose, we compute a skew-symmetric and orthogonal polar factor of $A$ using the matrix iterations from~\cite{NakatsukasaBaiGygi2010,NakatsukasaHigham2012}, whose main
computational kernels---QR factorizations, Cholesky factorizations, and matrix multiplications---are well suited to parallel architectures~\cite{Ltaief19}. The polar factor then yields a compression of the original problem to a half-size Hermitian EVP, which can be solved using standard Hermitian eigensolvers from general-purpose libraries. We demonstrate the numerical stability of the algorithm with experiments and we show that it can be competitive in terms of execution time with respect to alternative approaches. \new{We develop a similar method to reduce the special orthogonal EVP to a half-size unitary EVP.}

\paragraph{Reproducibility} An implementation of our method in Julia~\cite{Julia-2017}, as well as the codes to perform all experiments presented in this paper, are available at \url{https://github.com/smataigne/SkewSchur.jl}.

\paragraph{Organization of the paper}
\Cref{sec:preliminaries} recalls relevant properties of skew-symmetric matrices and polar decompositions.
Then, \Cref{sec:hamiltonian_reduction} develops the transformation of a
skew-symmetric eigenvalue problem to a half-size Hermitian eigenvalue problem.
The resulting algorithm is presented in~\Cref{sec:algorithm}, including implementation details and the treatment of matrices of odd size.
Numerical experiments regarding the accuracy and the running time of the algorithm are conducted in~\Cref{sec:experiments}, which also illustrates its use for the simulation of a hyperbolic partial differential equation.
Finally, \Cref{sec:special_orthogonal} explains how the same ideas can be adapted to reduce special
orthogonal eigenvalue problems to unitary eigenvalue problems of half the size. 

\paragraph{Notation} $\mathrm{Skew}(n)$ and $\mathrm{Sym}(n)$ denote, respectively, the sets of $n\times n$ skew-symmetric and symmetric matrices, while $\mathrm{skew}:\mathbb{R}^{n\times n}\rightarrow \mathrm{Skew}(n)$ and $\mathrm{sym}:\mathbb{R}^{n\times n}\rightarrow \mathrm{Sym}(n)$ return the skew-symmetric and symmetric parts of a matrix.
The orthogonal and unitary groups of $n\times n$ matrices are denoted by $\mathrm{O}(n)$ and $\mathrm{U}(n)$, respectively. 
The $n\times n$ identity matrix is denoted by $I_n$ and we set
\begin{equation*}
    J_{2n} \coloneq \begin{bmatrix}
        0&-I_n\\
        I_n&0
    \end{bmatrix} = \begin{bmatrix}
        0&-1\\
        1&0
    \end{bmatrix}\otimes I_n,
\end{equation*}
where  $\otimes$ denotes the Kronecker product. We use $\oplus$ to denote the direct sum of matrices.

\section{Preliminaries on skew-symmetric matrices}\label{sec:preliminaries}

This section recalls relevant properties of skew-symmetric eigenvalue problems and establishes several results concerning Hamiltonian matrices and polar decompositions.

\subsection{Spectral decomposition of skew-symmetric matrices}

We start by recalling the structure of a \emph{real spectral decomposition} (RSD) of a skew-symmetric matrix. 
\begin{lemma}[{\cite[Cor.~2.5.11]{Horn_Johnson_2012}}]\label{lem:rsd_existence}
    For every $A\in\mathrm{Skew}(2n)$, there exist an orthogonal matrix $Q\in\mathrm{O}(2n)$ and a diagonal matrix $\Sigma\in\mathbb{R}^{n\times n}$ with nonnegative diagonal entries such that
    \begin{equation}\label{eq:spectral_decomposition}
        A = Q \begin{bmatrix}
                0&-\Sigma\\
                \Sigma& 0
            \end{bmatrix}Q^\top = Q(J_2 \otimes \Sigma)Q^\top \eqcolon QSQ^\top.
    \end{equation}
    The factorization $A = QSQ^\top$ is called a \emph{real spectral decomposition (RSD)} of $A$. In slight abuse of notation, the matrix $Q$ is termed a \emph{spectral basis}.
\end{lemma}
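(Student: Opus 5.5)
The plan is to first obtain the standard block-diagonal real normal form and then permute it into the $J_2\otimes\Sigma$ layout. Since $A$ is real and normal ($AA^\top = -A^2 = A^\top A$), the real Schur theorem for normal matrices gives an orthogonal $\tilde Q$ with $\tilde Q^\top A \tilde Q$ block diagonal, with $1\times 1$ and $2\times 2$ diagonal blocks. Rather than invoke that theorem wholesale, I would argue directly and inductively, which also handles the skew structure cleanly.

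\emph{Step 1 (eigenvalues are purely imaginary).} If $Ax=\lambda x$ with $x\in\mathbb{C}^{2n}\setminus\{0\}$, then $\lambda\|x\|^2 = x^*Ax$. Taking conjugate transposes and using $A^\top=-A$ gives $\bar\lambda\|x\|^2 = -x^*Ax$, so $\lambda\in\mathrm{i}\mathbb{R}$.

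\emph{Step 2 (splitting off invariant planes).} Take an eigenvalue $\mathrm{i}\sigma$ with $\sigma\ge 0$ and an eigenvector $x=u+\mathrm{i}v$, where $u,v\in\mathbb{R}^{2n}$. Comparing real and imaginary parts gives $Au=-\sigma v$ and $Av=\sigma u$.
\begin{itemize}
\item If $\sigma>0$: the relation $u^\top A u=0$ yields $\sigma\, u^\top v = 0$, so $u^\top v=0$. From $v^\top A u = -u^\top A v$ we get $-\sigma\|v\|^2=-\sigma\|u\|^2$, so $\|u\|=\|v\|$. After normalization, $\{u,v\}$ is an orthonormal basis of an $A$-invariant plane on which $A$ acts as $\sigma\bigl[\begin{smallmatrix}0&1\\-1&0\end{smallmatrix}\bigr]$. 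Swapping the order of $u$ and $v$ gives the form $\sigma J_2$.
\item If $\sigma=0$: the kernel of $A$ is nontrivial, and we take a real unit vector from it.
\end{itemize}
In either case, the orthogonal complement of an $A$-invariant subspace is $A^\top$-invariant, hence $A$-invariant. So $A$ restricted to the complement is again skew-symmetric in an orthonormal basis, and we can induct on the dimension.

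\emph{Step 3 (assembling the blocks).} Induction yields an orthogonal $\tilde Q$ with
\[
\tilde Q^\top A\tilde Q=\sigma_1J_2\oplus\cdots\oplus\sigma_kJ_2\oplus 0_{m},
\qquad 2k+m=2n,
\]
so $m$ is even. Writing $0_m = 0\,J_2\oplus\cdots\oplus 0\,J_2$ with $m/2$ copies, we obtain $\tilde Q^\top A\tilde Q = \bigoplus_{j=1}^n \sigma_jJ_2 = I_n\otimes\Sigma'$-style interleaving, namely $\Sigma\otimes J_2$ with $\Sigma=\diag(\sigma_1,\dots,\sigma_n)$ and $\sigma_j\ge0$. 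Finally, let $P$ be the perfect-shuffle permutation satisfying $P^\top(\Sigma\otimes J_2)P = J_2\otimes\Sigma$; Kronecker factors commute up to such permutations. Setting $Q=\tilde Q P$ gives the claim.

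The main obstacle is Step 2. The orthogonality and equal norms of $u$ and $v$ must be extracted correctly, and the sign convention of $J_2$ must be matched by ordering $u$ and $v$ appropriately. The remaining steps are bookkeeping: the parity argument for the zero block, and checking the perfect shuffle on the index level, namely that position $(2j-1,2j)$ maps to $(j,n+j)$.
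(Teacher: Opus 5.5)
Your proof is correct. The only difference from the paper is that the paper gives no argument: it cites Horn and Johnson, Cor.~2.5.11. That corollary is the skew-symmetric case of the real Schur form for real normal matrices. It gives an orthogonal reduction to a direct sum of $2\times 2$ blocks and $1\times 1$ zeros, which the paper tacitly regroups into the $J_2\otimes\Sigma$ layout.

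You instead give the standard self-contained deflation proof: the spectrum is purely imaginary, an orthonormal pair $\{v,u\}$ spans an invariant plane on which $A$ acts as $\sigma J_2$, the orthogonal complement is invariant, and induction finishes. You also make explicit the bookkeeping the paper leaves implicit, namely pairing the zeros via $m=2n-2k$ and the perfect shuffle carrying $\Sigma\otimes J_2$ to $J_2\otimes\Sigma$. The citation buys brevity. Your route buys a direct check that the sign convention of $J_2$ and $\sigma_j\ge 0$ come out exactly in the paper's layout.

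A few details to tighten:
\begin{itemize}
\item State the induction for skew-symmetric matrices of every size, since deflating a kernel vector produces odd intermediate dimensions. Nothing in Steps~1--2 uses evenness, so this costs nothing.
\item The $1\times 1$ zero blocks may appear interleaved with the $2\times 2$ blocks. Either deflate all nonzero eigenvalues first, or apply one more permutation before collecting the zeros at the end.
\item Justify the choice $\sigma\ge 0$ by the conjugation symmetry of the spectrum of a real matrix.
\item Drop the garbled phrase ``$I_n\otimes\Sigma'$-style interleaving''; the matrix $\bigoplus_j\sigma_jJ_2$ is exactly $\Sigma\otimes J_2$.
\end{itemize}
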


If $A\in \mathrm{Skew}(2n+1)$, the matrix is singular and $\dim(\ker(A))$ is odd. An RSD takes the form $A  = Q((J_2 \otimes \Sigma) \oplus 0_{1\times 1})Q^\top $ with $Q \in \mathrm{O}(2n+1)$.  In the following, we will focus on even-sized matrices and return to the odd case in~\Cref{sec:odd}.

The RSD of a skew-symmetric matrix is never unique. For example, for every diagonal matrix $\Phi\in\mathbb{R}^{n\times n}$, we have
\begin{equation*}
    \begin{bmatrix}
        \cos(\Phi)&-\sin(\Phi)\\
        \sin(\Phi)&\cos(\Phi)
    \end{bmatrix}\begin{bmatrix}
                0&-\Sigma\\
                \Sigma& 0
            \end{bmatrix}\begin{bmatrix}
        \cos(\Phi)&-\sin(\Phi)\\
        \sin(\Phi)&\cos(\Phi)
    \end{bmatrix}^\top = \begin{bmatrix}
                0&-\Sigma\\
                \Sigma& 0
            \end{bmatrix}.
\end{equation*}

An important step in our approach is to transform a skew-symmetric matrix orthogonally into skew-symmetric Hamiltonian form.

\begin{definition}\label{def:hamiltonian}
    A skew-symmetric Hamiltonian matrix $A_\mathcal{H}\in\mathbb{R}^{2n\times 2n}$ takes the form
    \begin{equation} \label{eq:sham}
        A_\mathcal{H} = \begin{bmatrix}
            \Omega&-H\\
            H&\Omega
        \end{bmatrix} = I_2 \otimes \Omega + J_2 \otimes H,
    \end{equation}
    with $\Omega \in\mathrm{Skew}(n)$ and $H\in\mathrm{Sym}(n)$.
    The set of $2n \times 2n$ skew-symmetric Hamiltonian matrices is denoted by $\mathrm{SkH}(2n)$.
\end{definition}

An important set related to Hamiltonian matrices is the group $\mathrm{OSp}(2n)$ of orthogonal symplectic matrices, i.e., the set of $2n\times 2n$ orthogonal matrices commuting with $J_{2n}$~\cite{Dopico09,Fassbender01}:
\begin{equation*}
    \mathrm{OSp}(2n)\coloneq \{M\in \mathrm{O}(2n)\ | \ MJ_{2n} = J_{2n} M\}.
\end{equation*}
The commutativity with $J_{2n}$ induces a specific block structure:
\begin{equation}\label{eq:symplectic_blocks}
    M\in\mathrm{O}(2n)\ \text{and} \ MJ_{2n} = J_{2n} M \iff M = \begin{bmatrix}
            U_\mathrm{i}&-U_\mathrm{r}\\
            U_\mathrm{r}&U_\mathrm{i}
        \end{bmatrix}, \ U_\mathrm{r} + \mathrm{i} U_\mathrm{i} \in \mathrm{U}(n).
\end{equation}
This yields the fundamental result that orthogonal symplectic matrices are isomorphic to unitary matrices: $ \mathrm{OSp}(2n)\simeq \mathrm{U}(n)$.

Likewise, skew-symmetric Hamiltonian matrices of size $2n$ are in one-to-one correspondence with Hermitian matrices of size $n$. As a consequence, an eigenvalue decomposition
\[
 H+\mathrm{i} \Omega = (U_\mathrm{r} + \mathrm{i} U_\mathrm{i})\widetilde{\Sigma} (U_\mathrm{r} + \mathrm{i} U_\mathrm{i})^*
\]
yields the block diagonalization
\begin{equation}\label{eq:hermitian_evp}
        \begin{bmatrix}
            \Omega&-H\\
            H&\Omega
        \end{bmatrix}=\begin{bmatrix}
            U_\mathrm{i}&-U_\mathrm{r}\\
            U_\mathrm{r}&U_\mathrm{i}
        \end{bmatrix} \begin{bmatrix}
            0&-\widetilde{\Sigma}\\
            \widetilde{\Sigma} & 0
        \end{bmatrix} \begin{bmatrix}
            U_\mathrm{i}&-U_\mathrm{r}\\
            U_\mathrm{r}&U_\mathrm{i}
        \end{bmatrix}^\top,
    \end{equation}
    where $\widetilde{\Sigma} \in \mathbb R^{n\times n}$ is diagonal (but its diagonal entries are not necessarily nonnegative); see also~\cite{Fassbender01,Tisseur2001}. An RSD is obtained from~\eqref{eq:hermitian_evp} by replacing 
$\widetilde{\Sigma}$ with $|\widetilde{\Sigma}|$
and applying suitable sign changes to the corresponding basis vectors.

Given $A \in\mathrm{Skew}(2n)$, equation~\eqref{eq:hermitian_evp} suggests a natural route for solving skew-symmetric eigenvalue problems: First, one computes $Z \in \mathrm{O}(2n)$ such that $A_\mathcal{H} = Z^\top A Z$ is skew-symmetric Hamiltonian. Then, the associated Hermitian eigenvalue problem of half the size is solved using standard libraries such as LAPACK~\cite{lapack99}. 


\subsection{Polar decomposition of skew-symmetric matrices}

The polar decomposition plays a central role in constructing the matrix $Z$ needed in the approach outlined above.
Let us recall the definition of a polar decomposition.
\begin{definition}[{\cite[Thm.~9.4.1]{golubvanloan}}] \label{def:polar}
    For every matrix $A\in\mathbb{R}^{n\times n}$, a polar decomposition is $A = PY$ where $P\in\mathrm{O}(n)$ and $Y\in\mathrm{Sym}(n)$ is positive semidefinite ($Y\succeq 0$). The matrix $P$ is called an \emph{orthogonal polar factor} of $A$.
\end{definition}

    

For matrices of even size, our method requires the selection of an orthogonal polar factor $P$ that is also \emph{skew-symmetric}.
The existence of such a skew-symmetric orthogonal polar factor (\emph{skopf}) follows from~\cite[Thm~5.4]{MR2208338}. The following lemma makes the additional observation that any such polar factor and the original matrix admit a common spectral basis.
\begin{lemma}\label{lem:polar_decomposition}
     Let $A\in\mathrm{Skew}(2n)$. Then $P$ is a skew-symmetric orthogonal polar factor (\emph{skopf}) of $A$ if and only if there exist $Q\in\mathrm{O}(2n)$ and a nonnegative diagonal matrix $\Sigma \in \mathbb R^{n \times n}$ such that $P = QJ_{2n}Q^\top$ and $A = Q(J_2 \otimes \Sigma)Q^\top$.
\end{lemma}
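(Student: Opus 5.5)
The plan is to prove the two directions separately. The reverse direction is a direct computation. The forward direction reduces, via the RSD of \Cref{lem:rsd_existence}, to describing all skew-symmetric orthogonal matrices that commute with a fixed block matrix. Throughout, write $S = J_2\otimes\Sigma$ and note that $J_{2n} = J_2\otimes I_n$.

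For the ``if'' direction, assume $P = QJ_{2n}Q^\top$ and $A = QSQ^\top$. Then $P$ is orthogonal and skew-symmetric, because $J_{2n}$ is. Set $Y \coloneq P^\top A = Q J_{2n}^\top (J_2\otimes \Sigma) Q^\top$. Since $J_2^\top J_2 = I_2$, this equals $Q(I_2\otimes\Sigma)Q^\top$, which is symmetric positive semidefinite because $\Sigma\ge 0$. Hence $A = PY$ is a polar decomposition in the sense of \Cref{def:polar}.

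For the ``only if'' direction, let $P$ be a skopf, so $A = PY$ with $Y\succeq 0$. Then $A^\top A = Y^2$, hence $Y = (A^\top A)^{1/2} = (-A^2)^{1/2}$ is the unique psd square root, and $Y$ is a polynomial in $A$. Take any RSD $A = Q_0 S Q_0^\top$ and order $\Sigma = \sigma_1 I_{m_1}\oplus\cdots\oplus\sigma_k I_{m_k}\oplus 0_{m_0}$ with distinct $\sigma_j>0$. After a permutation of the basis this reads $A = Q_0\bigl(\bigoplus_j (J_2\otimes\sigma_j I_{m_j}) \oplus 0_{2m_0}\bigr)Q_0^\top$. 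Then $Y = Q_0 (I_2\otimes\Sigma) Q_0^\top$.

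Next, transform $P$ into this basis. Let $\tilde P = Q_0^\top P Q_0$; it is skew-symmetric and orthogonal and satisfies $\tilde P(I_2\otimes\Sigma) = J_2\otimes \Sigma$. On the block belonging to each $\sigma_j>0$, the matrix $I_2\otimes\sigma_j I_{m_j}$ is invertible. This forces the columns of $\tilde P$ for that block to equal $J_2\otimes I_{m_j}$ there. By orthogonality, $\tilde P$ is then block diagonal: it is $J_{2m_j}$ on the nonzero-$\sigma$ blocks and some $W\in\mathrm{O}(2m_0)\cap\mathrm{Skew}(2m_0)$ on the kernel block. I expect this step to be the main obstacle, namely the bookkeeping of the kernel block, where $P$ is not determined by $A$.

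To handle the kernel block, note that $W$ is real, orthogonal and skew-symmetric, so $W^2=-I$. Applying \Cref{lem:rsd_existence} to $W$ gives $W = V J_{2m_0} V^\top$ with $V$ orthogonal. The singular values are all $1$, since $W$ is orthogonal. Because $A$ vanishes on this block, we can replace $Q_0$ there by $Q_0V$ without changing $A$, where $\Sigma$ has zeros. Undoing the permutation, the resulting $Q$ satisfies $P = QJ_{2n}Q^\top$ and $A = Q(J_2\otimes\Sigma)Q^\top$. The permutation must be chosen so that block sums of $J_2\otimes\cdot$ recombine into $J_2\otimes\Sigma$ and $J_{2n}$ simultaneously; this holds because both are Kronecker products with the same pairing of indices $i$ and $n+i$.
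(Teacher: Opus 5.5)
Your proof is correct and follows essentially the same route as the paper: $P$ is forced on the range of $A$. On $\ker A$ it is an arbitrary skew-symmetric orthogonal block, which is brought to $J$-form by its own RSD (whose singular values must all equal $1$, a step the paper only asserts and you justify), followed by the index-pairing permutation. The only difference is organizational: you work uniformly in a single RSD basis and read off the forced columns from $\tilde P(I_2\otimes\Sigma)=J_{2n}(I_2\otimes\Sigma)$ together with orthogonality, whereas the paper splits into the invertible case (via uniqueness of the polar factor) and the singular case (via a block reduction and \cref{lem:polar_plus}).
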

\begin{proof}
    ($\Longleftarrow $) Suppose that $P = QJ_{2n}Q^\top$ and $A=Q(J_2 \otimes \Sigma)Q^\top$ with $Q,\Sigma$ as
    in the statement of the lemma. Using the identity $ J_{2}\otimes \Sigma = J_{2n} (I_2 \otimes \Sigma)$, we directly obtain 
    \begin{equation}\label{eq:polar_decomposition}
        A =  Q (J_2 \otimes \Sigma)Q^\top \\
            =  \left( Q J_{2n} Q^\top\right)\left( Q (I_2\otimes \Sigma)Q^\top\right).
    \end{equation}
    Because of $Q( I_2\otimes \Sigma) Q^\top\succeq 0$, this is a polar decomposition with the skopf $P$ of $A$.
   
    ($\Longrightarrow$) Assume that $A = PY$ is a polar decomposition with $P\in\mathrm{O}(2n)\cap \mathrm{Skew}(2n)$. 
    
    If $A$ is invertible then $P$ is unique~\cite[Chap.~9.4.3]{golubvanloan}. Therefore, considering an RSD $A=Q(J_2 \otimes \Sigma)Q^\top$, the identity $P=QJ_{2n} Q^\top$ holds by uniqueness and \eqref{eq:polar_decomposition}.

    If $A$ is singular then it has rank $2r< 2n$. Then there is $V\in\mathrm{O}(2n)$ such that $V^\top A V = \widetilde{A} \oplus 0_{2(n-r)}$ where $\widetilde{A}\in\mathrm{Skew}(2r)$ is invertible. Consider a skopf $P$ of~$A$. Then $\widehat{P}:= V^\top P V$ is an orthogonal polar factor of
    $\widetilde{A} \oplus 0_{2(n-r)}$, which, by \cref{lem:polar_plus}, necessarily takes the form
    \[\widehat{P} = \widetilde{Q}J_{2r}\widetilde{Q}^\top \oplus R.\]
    Here, by the discussion above, the first diagonal block
    $\widetilde{Q}J_{2r}\widetilde{Q}^\top$ is the unique polar factor of $\widetilde{A} = \widetilde{Q} (J_2 \otimes \widetilde{\Sigma})\widetilde{Q}^\top$. For the second diagonal block, an arbitrary $R\in\mathrm{O}(2(n-r))$ is an orthogonal polar factor of $0_{2(n-r)}$. One observes that $\widehat{P}$, and thus $P$, are skew-symmetric if and only if $R$ is skew-symmetric. This implies that $R = \widetilde{Q}_RJ_{2(n-r)}\widetilde{Q}_R^\top$ for some $\widetilde{Q}_R\in\mathrm{O}(2(n-r))$. Note that
    \begin{equation*}
        A = V \begin{bmatrix}
            \widetilde{Q}&0\\
            0&\widetilde{Q}_R
        \end{bmatrix} \begin{bmatrix}
            (J_2\otimes \widetilde{\Sigma})&0\\
            0&0_{2(n-r)}
        \end{bmatrix}  \begin{bmatrix}
            \widetilde{Q}^\top&0\\
            0&\widetilde{Q}_R^\top 
        \end{bmatrix} V^\top  ,  
    \end{equation*}
    is a permuted RSD. 
    Letting $\Sigma \coloneq \widetilde{\Sigma} \oplus 0_{n-r}$ and $Q$ be obtained by an appropriate permutation of the columns of $V
    (\widetilde{Q}\oplus \widetilde{Q}_R)$ establishes the decompositions $P=QJ_{2n}Q^\top$ and $A = Q(J_2 \otimes \Sigma)Q^\top$.
\end{proof}

\section{Transformation to skew-symmetric Hamiltonian form}\label{sec:hamiltonian_reduction}

Consider $A\in\mathrm{Skew}(2n)$, and let $P$ be a skopf of $A$. By \cref{lem:polar_decomposition}, there exist $Q\in\mathrm{O}(2n)$ and a nonnegative
diagonal matrix $\Sigma\in\mathbb{R}^{n\times n}$ such that
\[
    A = Q(J_2\otimes\Sigma)Q^\top,
    \qquad
    P = QJ_{2n}Q^\top.
\]
We claim that, in fact, \emph{any} spectral basis of $P$ transforms $A$ to
skew-symmetric Hamiltonian form. Indeed, suppose that $Z\in\mathrm{O}(2n)$
satisfies
\[
    P=ZJ_{2n}Z^\top,
\]
and set $M:=Q^\top Z\in\mathrm{O}(2n)$. Comparing the two spectral
decompositions of $P$ gives
\[
    J_{2n}=MJ_{2n}M^\top.
\]
Thus, $M$ is orthogonal symplectic. Since $J_2\otimes\Sigma$ commutes with $J_{2n}$,
it follows that
\[
    M^\top (J_2\otimes\Sigma) M J_{2n}
    =J_{2n} M^\top (J_2\otimes\Sigma) M.
\]
In other words,
$Z^\top A Z=M^\top (J_2\otimes\Sigma) M$
is skew-symmetric and Hamiltonian.

It remains to construct a spectral basis $Z$ of $P$ in an efficient manner, without
resorting to a general-purpose eigensolver. \Cref{thm:qfactor} below provides such a
construction by extracting the eigenspace of $P$ associated with the
eigenvalue $\mathrm{i}$ from the range of $P+\mathrm{i}I_{2n}$.
Here and in the following, the function $\texttt{qf}(\cdot)$ returns an orthonormal basis of the space spanned by the columns of the input matrix. In practice, it is obtained as the Q-factor of a thin QR factorization.

\begin{theorem} \label{thm:qfactor}
Let $P\in\mathrm{Skew}(2n)\cap\mathrm{O}(2n)$ and consider any
$V\in\mathbb{R}^{2n\times n}$ such that
$(P+\mathrm{i}I_{2n})V$ has full column rank. Define
\[
    \widetilde V:=\text{\emph{\texttt{qf}}}\bigl((P+\mathrm{i}I_{2n})V\bigr), \quad \text{and} \quad
    Z:=\sqrt{2}
    \begin{bmatrix}
        \Re(\widetilde V) & -\Im(\widetilde V)
    \end{bmatrix},
\]
where $\Re(\cdot)$ and $\Im(\cdot)$ denote the real and imaginary parts, respectively.
Then $Z\in\mathrm{O}(2n)$ and
\[
    P=ZJ_{2n}Z^\top.
\]
\end{theorem}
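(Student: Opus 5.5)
The plan is to show that the columns of $\widetilde V$ form an orthonormal basis of the eigenspace $\ker(P-\mathrm{i}I_{2n})$. Then we read off the real orthogonality and the identity $P=ZJ_{2n}Z^\top$ from the relations satisfied by real and imaginary parts.

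\emph{Step 1: spectral facts about $P$.} Since $P$ is skew-symmetric and orthogonal, $P^2=-P^\top P=-I_{2n}$. Hence $P$ is normal with eigenvalues in $\{\mathrm{i},-\mathrm{i}\}$. Because $P$ is real, the two eigenspaces $E_\pm=\ker(P\mp\mathrm{i}I)$ are complex conjugates of each other, so each has dimension $n$. Moreover, $E_+\perp E_-$ in the Hermitian inner product, by normality.

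\emph{Step 2: the range of $P+\mathrm{i}I$.} From $(P-\mathrm{i}I)(P+\mathrm{i}I)=P^2+I=0$ we get $\operatorname{range}(P+\mathrm{i}I)\subseteq E_+$. Since $(P+\mathrm{i}I)V$ has full column rank $n=\dim E_+$, its range is all of $E_+$. Therefore $\widetilde V\in\mathbb{C}^{2n\times n}$ satisfies $\widetilde V^*\widetilde V=I_n$ and $P\widetilde V=\mathrm{i}\widetilde V$. The conjugate $\overline{\widetilde V}$ spans $E_-$, so $\widetilde V^{\top}\widetilde V=\overline{\widetilde V}^{\,*}\widetilde V=0$ by Step 1. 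Consequently $W:=[\widetilde V,\ \overline{\widetilde V}]$ is unitary, and $P=W\,\mathrm{diag}(\mathrm{i}I_n,-\mathrm{i}I_n)\,W^*$.

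\emph{Step 3: translate to real form.} Write $\widetilde V=X+\mathrm{i}Y$. Taking real and imaginary parts of $\widetilde V^*\widetilde V=I$ and $\widetilde V^\top\widetilde V=0$ gives
\[
X^\top X+Y^\top Y=I_n,\quad X^\top Y-Y^\top X=0,\quad X^\top X-Y^\top Y=0,\quad X^\top Y+Y^\top X=0 .
\]
Hence $X^\top X=Y^\top Y=\tfrac12 I_n$ and $X^\top Y=0$, which shows $Z=\sqrt2[X,\,-Y]$ is orthogonal. Alternatively, note that $Z=W\,T$ with $T=\tfrac{1}{\sqrt2}\begin{bmatrix} I&\mathrm{i}I\\ I&-\mathrm{i}I\end{bmatrix}$ unitary. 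Next, splitting $P(X+\mathrm{i}Y)=\mathrm{i}(X+\mathrm{i}Y)$ into real and imaginary parts yields $PX=-Y$ and $PY=X$. Then $P[X,\,-Y]=[-Y,\,-X]$, while $[X,\,-Y]J_{2n}=[-Y,\,-X]$. So $PZ=ZJ_{2n}$, and by orthogonality of $Z$ this gives $P=ZJ_{2n}Z^\top$.

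The only delicate point is Step 2. There we must justify both that the range fills the whole eigenspace, which uses the full-rank hypothesis together with $\dim E_+=n$, and that $\widetilde V^\top\widetilde V=0$, which requires the orthogonality of the conjugate eigenspace. Once these are in place, everything else is a routine split into real and imaginary parts.
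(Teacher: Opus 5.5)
Your proof is correct, and its skeleton matches the paper's. Both derive $P\widetilde V=\mathrm{i}\widetilde V$ from $P^2=-I_{2n}$, split it into $PX=-Y$ and $PY=X$, and check $PZ=ZJ_{2n}$ in the same way. You differ in how you obtain the relations $X^\top X=Y^\top Y$ and $X^\top Y=0$, which $\widetilde V^*\widetilde V=I_n$ alone does not give. You encode both in the single complex identity $\widetilde V^\top\widetilde V=0$ and deduce it from the orthogonality of the eigenspaces $E_\pm$ of the normal matrix $P$. The paper works directly with the real and imaginary parts: it gets $Y^\top Y=X^\top P^\top PX=X^\top X$ from the orthogonality of $P$, and $X^\top Y=0$ because $X^\top Y=-X^\top PX$ is both symmetric and skew-symmetric. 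Your version is more conceptual. As a by-product it gives $P=W\,\mathrm{diag}(\mathrm{i}I_n,-\mathrm{i}I_n)W^*$ and $Z=WT$, which shows $Z$ as the standard passage from a complex eigenbasis to a real block form. The paper's version is more elementary, using only $P^\top P=I_{2n}$ and $P^\top=-P$ and no spectral theory of normal matrices. The two are one step apart: $\widetilde V^\top\widetilde V=-\mathrm{i}\,\widetilde V^\top P\widetilde V$ is simultaneously symmetric and skew-symmetric, which is exactly the complexified form of the paper's argument. Finally, the step you flag as delicate is lighter than you suggest. You need neither $\dim E_+=n$ nor $\operatorname{range}\bigl((P+\mathrm{i}I_{2n})V\bigr)=E_+$: having the $n$ orthonormal columns of $\widetilde V$ in $E_+$, with their conjugates in $E_-$, already gives $W^*W=I_{2n}$ for the square matrix $W$. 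Moreover, your Step~3 does not use $W$ at all.
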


\begin{proof}
Since $P$ is skew-symmetric and orthogonal, 
$P^2=-I_{2n}$ and, hence, 
\[  
    P(P+\mathrm{i}I_{2n})V
    =(P^2+\mathrm{i}P)V
    =(-I_{2n}+\mathrm{i}P)V
    =\mathrm{i}(P+\mathrm{i}I_{2n})V.
\]
Consequently,
$
    P\widetilde V=\mathrm{i}\widetilde V.
$
Writing $\widetilde V=\widetilde{V}_\mathrm{r}+\mathrm{i}\widetilde{V}_\mathrm{i}$ with $\widetilde{V}_\mathrm{r} = \Re(\widetilde V)$ and 
$\widetilde{V}_\mathrm{i} = \Im(\widetilde V)$, and comparing real and imaginary
parts gives
\begin{equation} \label{eq:pvr}
     P\widetilde{V}_\mathrm{r} =-\widetilde{V}_\mathrm{i},
    \qquad
    P\widetilde{V}_\mathrm{i} = \widetilde{V}_\mathrm{r}.
\end{equation}
On the other hand, because $\widetilde V$ has orthonormal columns,
\[
    \widetilde{V}_\mathrm{r}^\top \widetilde{V}_\mathrm{r}+\widetilde{V}_\mathrm{i}^\top \widetilde{V}_\mathrm{i}=I_n,
    \qquad
    \widetilde{V}_\mathrm{r}^\top \widetilde{V}_\mathrm{i} = \widetilde{V}_\mathrm{i}^\top \widetilde{V}_\mathrm{r}.
\]
Combined with~\eqref{eq:pvr} and the orthogonality of $P$, this gives
\[
    \widetilde{V}_\mathrm{i}^\top \widetilde{V}_\mathrm{i}=\widetilde{V}_\mathrm{r}^\top P^\top P \widetilde{V}_\mathrm{r}=\widetilde{V}_\mathrm{r}^\top \widetilde{V}_\mathrm{r} \quad \Longrightarrow \quad \widetilde{V}_\mathrm{r}^\top \widetilde{V}_\mathrm{r} = \widetilde{V}_\mathrm{i}^\top \widetilde{V}_\mathrm{i} = \frac12 I_n.
\]
Also, $\widetilde{V}_\mathrm{r}^\top \widetilde{V}_\mathrm{i}=-\widetilde{V}_\mathrm{r}^\top P \widetilde{V}_\mathrm{r}$ is not only symmetric but also skew-symmetric (because $P$ is skew-symmetric). This implies 
$\widetilde{V}_\mathrm{r}^\top \widetilde{V}_\mathrm{i}=0$. In summary, we have shown that $Z\in\mathrm{O}(2n)$.
Combined with
\[
    PZ
    =\sqrt{2}\begin{bmatrix}P \widetilde{V}_\mathrm{r}&-P \widetilde{V}_\mathrm{i}\end{bmatrix}
    =\sqrt{2}\begin{bmatrix}-\widetilde{V}_\mathrm{i}&-\widetilde{V}_\mathrm{r} \end{bmatrix}
    =ZJ_{2n},
\]
this completes the proof.
\end{proof}

Together with the discussion in the beginning of this section, \cref{thm:qfactor} establishes the following result.
\begin{corollary} \label{thm:polar_rsd}
Let $A\in\mathrm{Skew}(2n)$ and consider the matrix $Z$ constructed in \cref{thm:qfactor} for a skopf $P$ of $A$. Then 
$A_\mathcal{H} = Z^\top A Z$ is a skew-symmetric Hamiltonian matrix.
\end{corollary}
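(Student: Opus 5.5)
The plan is to combine \cref{thm:qfactor} with the argument at the beginning of \cref{sec:hamiltonian_reduction}. Let $P$ be a skopf of $A$, and let $V$ be chosen so that $(P+\mathrm{i}I_{2n})V$ has full column rank. \Cref{thm:qfactor} then gives $Z\in\mathrm{O}(2n)$ with $P=ZJ_{2n}Z^\top$.

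Next, by \cref{lem:polar_decomposition} there exist $Q\in\mathrm{O}(2n)$ and a nonnegative diagonal $\Sigma$ such that $P=QJ_{2n}Q^\top$ and $A=Q(J_2\otimes\Sigma)Q^\top$. Set $M:=Q^\top Z\in\mathrm{O}(2n)$. Equating the two expressions for $P$ gives $J_{2n}=MJ_{2n}M^\top$, so $MJ_{2n}=J_{2n}M$ and $M\in\mathrm{OSp}(2n)$.

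We then compute $A_\mathcal{H}=Z^\top AZ=M^\top(J_2\otimes\Sigma)M$. This matrix is skew-symmetric, being an orthogonal congruence of a skew-symmetric matrix. The identity $J_2\otimes\Sigma=J_{2n}(I_2\otimes\Sigma)$ shows that $J_2\otimes\Sigma$ commutes with $J_{2n}$. Since $M$ and $M^\top$ also commute with $J_{2n}$, so does $A_\mathcal{H}$.

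It remains to check that a skew-symmetric matrix commuting with $J_{2n}$ has the form \eqref{eq:sham}. Write it in blocks $\begin{bmatrix}X_{11}&X_{12}\\X_{21}&X_{22}\end{bmatrix}$. Commutation with $J_{2n}$ forces $X_{11}=X_{22}=:\Omega$ and $X_{21}=-X_{12}=:H$. Skew-symmetry then gives $\Omega^\top=-\Omega$ and $X_{12}^\top=-X_{21}$, that is, $-H^\top=-H$, so $H$ is symmetric.

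This last block check is the only step not already written out in the paper, and it is routine. I therefore expect no real obstacle. The one point needing care is that the corollary tacitly assumes a $V$ with $(P+\mathrm{i}I_{2n})V$ of full column rank exists. Such a $V$ does exist: $P+\mathrm{i}I_{2n}$ has rank $n$, since its range is the $\mathrm{i}$-eigenspace of $P$, and because that range equals the column space of $P+\mathrm{i}I_{2n}$, some $n$ real columns of $P+\mathrm{i}I_{2n}$ span it. Hence a suitable column selection $V$ works.
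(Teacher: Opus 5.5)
Your proposal is correct and follows essentially the same argument as the paper. Lemma~\ref{lem:polar_decomposition} gives a common spectral basis $Q$, then $M=Q^\top Z$ is orthogonal symplectic, and $Z^\top AZ=M^\top(J_2\otimes\Sigma)M$ commutes with $J_{2n}$. Your explicit block check that a skew-symmetric matrix commuting with $J_{2n}$ has the form \eqref{eq:sham}, and your remark on the existence of an admissible $V$, fill in details the paper leaves implicit.
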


\section{Algorithm and implementation details} \label{sec:algorithm}

The results of \Cref{sec:hamiltonian_reduction} provide the theoretical basis of~\cref{alg:hamiltospectral} for computing an RSD of a skew-symmetric matrix. At step~$1$, a skopf $P$ of $A$ is computed. Steps~$2$ to~$4$
realize the construction of \cref{thm:qfactor} to compute an RSD $P = ZJ_{2n}Z^\top$ with $Z = [Z_1\ |\ Z_2]$.
By \cref{thm:polar_rsd}, the transformed matrix
$A_{\mathcal H}=Z^\top AZ$ is skew-symmetric Hamiltonian.
Step~$5$ computes the blocks 
$\Omega \coloneq Z_1^\top (AZ_1)$ and $H \coloneq Z_2^\top (AZ_1)$ in the partition~\cref{eq:sham}
of the skew-symmetric Hamiltonian matrix $A_\mathcal{H} = Z^\top AZ$. At Step~$6$, the equivalent Hermitian EVP  $H+i\Omega = U\widetilde{\Sigma} U^*$ is solved, which gives the block diagonalization~\eqref{eq:hermitian_evp} of $A$. 
In Step~$7$, an RSD of $A$ is obtained by removing the signs from $\widetilde{\Sigma}$:
\begin{equation*}
    A = Z  \begin{bmatrix}
            U_\mathrm{i}&-U_\mathrm{r}\\
            U_\mathrm{r}&U_\mathrm{i}
        \end{bmatrix} \begin{bmatrix}
            I_n&0\\
            0&\mathrm{sign}(\widetilde{\Sigma} )
        \end{bmatrix}\begin{bmatrix}
            0&-|\widetilde{\Sigma} |\\
            |\widetilde{\Sigma} | & 0
        \end{bmatrix}\begin{bmatrix}
            I_n&0\\
            0&\mathrm{sign}(\widetilde{\Sigma} )
        \end{bmatrix} \begin{bmatrix}
            U_\mathrm{i}&-U_\mathrm{r}\\
            U_\mathrm{r}&U_\mathrm{i}
        \end{bmatrix}^\top Z^\top,
\end{equation*}
where we use the convention $\mathrm{sign}(0)\coloneq 1$. 
In the rest of this section, we discuss the implementation of each step of \cref{alg:hamiltospectral}.
\begin{algorithm}
    \caption{Real spectral decomposition (RSD) of skew-symmetric matrix}
    \begin{algorithmic}
        \STATE \textbf{Input:} $A\in\mathrm{Skew}(2n)$.
        \STATE \textbf{Output:} $Q\in\mathrm{O}(2n)$ and nonnegative diagonal matrix $\Sigma\in\mathbb{R}^{n\times n}$ such that $A = Q (J_2 \otimes \Sigma )Q^\top$.
        \STATE \textbf{step 1:} Compute a skew-symmetric orthogonal polar factor $P$ of $A$.
        \STATE \textbf{step 2:} Sample $V \in \mathbb R^{2n\times n}$ with independent standard normal entries.
        \STATE \textbf{step 3:} Compute an orthonormal basis $\widetilde{V} = \texttt{qf}((P + \mathrm{i}I_{2n})V)\in\mathbb{C}^{2n\times n}$.
        \STATE \textbf{step 4:} Set $Z_1 \coloneq \sqrt{2} \Re(\widetilde{V})$ and $Z_2 \coloneq -\sqrt{2} \Im(\widetilde{V})$.
        \STATE \textbf{step 5:} Compute $\Omega \coloneq  Z_1^\top (A Z_1)$ and $H\coloneq Z_2^\top (A Z_1) $.
        \STATE \textbf{step 6:} Solve the Hermitian EVP of $H + i\Omega = U\widetilde{\Sigma} U^*$ where $U = U_\mathrm{r} + \mathrm{i} U_\mathrm{i}$.
        \STATE \textbf{step 7:} Compute $Q \coloneq  [Z_1U_\mathrm{i} + Z_2U_\mathrm{r}\ |\ (-Z_1 U_\mathrm{r} + Z_2 U_\mathrm{i})\mathrm{sign}(\widetilde{\Sigma} )]$ and $\Sigma\coloneq |\widetilde{\Sigma} |$.
        \RETURN $Q$ and $\Sigma$.
    \end{algorithmic}
    \label{alg:hamiltospectral}
\end{algorithm}

\begin{remark}
   In exact arithmetic, \cref{alg:hamiltospectral}
   also applies to a singular matrix $A$, provided that a skopf is available. In the presence of roundoff error, the case of a (nearly) singular $A$ can make the accurate computation challenging, effectively restricting the set of applicable methods for computing a skopf, as discussed in \Cref{sec:polar_factor}. In practice, the case of an exactly singular matrix can be mitigated by adding a skew-symmetric random matrix $E$ to $A$ of small norm $\|E\|_\mathrm{F}  = u \|A\|_\mathrm{F}$, where $u$ denotes unit roundoff. A decomposition of $A+E$ can still be interpreted in terms of structured backward
error.
\end{remark}

\subsection{Step 1: Computing skopf}\label{sec:polar_factor}

The stable and fast computation of a skopf is critical to the well-functioning of \cref{alg:hamiltospectral}. In \Cref{sec:app_polar_factor} several methods from the literature for computing polar decompositions are adapted to preserve skew-symmetry and compared numerically. Their behavior differs significantly for nearly singular matrices. Our experiments indicate that QDWH~\cite{NakatsukasaBaiGygi2010}, combined with a skew-symmetric projection after each iteration, provides the most robust overall choice. In practice, it produces
small orthogonality and polar-decomposition residuals in our tests,
including for matrices with condition numbers as large as $10^{16}$.

\subsection{Steps~2 to~4: Choosing a sampling matrix $V$ and computing $\widetilde{V}$}
The method requires choosing a matrix $V\in\mathbb{R}^{2n\times n}$ such that the matrix $(P+\mathrm{i} I_{2n})V\in\mathbb{C}^{2n\times n}$ has full column rank and is, preferably, well-conditioned. Note that the eigenvalues of $P+\mathrm{i}I_{2n}$ are $0$ and $2 \mathrm{i}$, each with multiplicity $n$. In particular, $P+\mathrm{i} I_{2n}$ has only~$n$ linearly independent columns. Indeed, using an RSD $P = QJ_{2n}Q^\top$, it follows that
\begin{equation*}
    P + \mathrm{i} I_{2n} = Q\begin{bmatrix}
        \mathrm{i} I_n&-I_n\\
        I_n& \mathrm{i} I_n
    \end{bmatrix}Q^\top.
\end{equation*}
This decomposition reveals the linear dependency and $V = Q\left[\begin{smallmatrix}
        I_n\\
        0
    \end{smallmatrix}\right]$ would be an excellent choice. However, this choice cannot be achieved because $Q$ is unknown. 
    We therefore choose $V$ randomly. More precisely, let
$V\in\mathbb R^{2n\times n}$ be a Gaussian random matrix (that is, $V$ has independent standard normal entries).
Partition
$Q^\top V
    =
    \begin{bmatrix}
        G_1\\G_2
    \end{bmatrix}$ with 
    $G_1,G_2\in\mathbb R^{n\times n}$.
By rotational invariance, $G_1$ and $G_2$ are independent Gaussian random matrices. Setting $G=G_1+\mathrm{i}G_2$, we obtain
\[
    (P+\mathrm{i}I_{2n})V
    =
    Q
    \begin{bmatrix}
        \mathrm{i}I_n&-I_n\\
        I_n&\mathrm{i}I_n
    \end{bmatrix}
    \begin{bmatrix}
        G_1\\G_2
    \end{bmatrix}
    =
    Q
    \begin{bmatrix}
        \mathrm{i}G\\G
    \end{bmatrix}.
\]
Consequently,
$
    \kappa_2\bigl((P+\mathrm{i}I_{2n})V\bigr)
    =
    \kappa_2(G),
$
and $(P+\mathrm{i}I_{2n})V$ has full column rank with probability one.
The condition number of a square complex Gaussian matrix typically
grows linearly with $n$; see
\cite[Thm.~6.2]{Edelman1988RandomEigenvalues}.

    Although $\kappa_2((P+\mathrm{i}I_{2n})V)$ grows linearly with $n$, it
remains moderate for the matrix sizes typically considered.
There is therefore some flexibility in the choice of the
orthogonalization method in Step~3. In our experiments, we use a
blocked Householder QR factorization, which exploits Level~3
BLAS and combines high performance with excellent numerical
stability~\cite[Thm.~19.4]{Higham2002}. More parallel and
communication-efficient alternatives include block Gram--Schmidt
methods with reorthogonalization and CholeskyQR2~\cite{MR3359252}.

    The right plot of \Cref{fig:conditioning} shows how the residual
$\|PZ-ZJ_{2n}\|_{\mathrm F}/\|P\|_\mathrm{F}$ grows with $n$. The observed growth is
consistent with the increasing condition number of
$(P+\mathrm{i}I_{2n})V$ and with the fact that the mutual
orthogonality relations between the real and imaginary parts of
$\widetilde V$ are inherited from the structure rather than
enforced numerically .

    \begin{figure}
        \centering
        \includegraphics[width=0.48\linewidth]{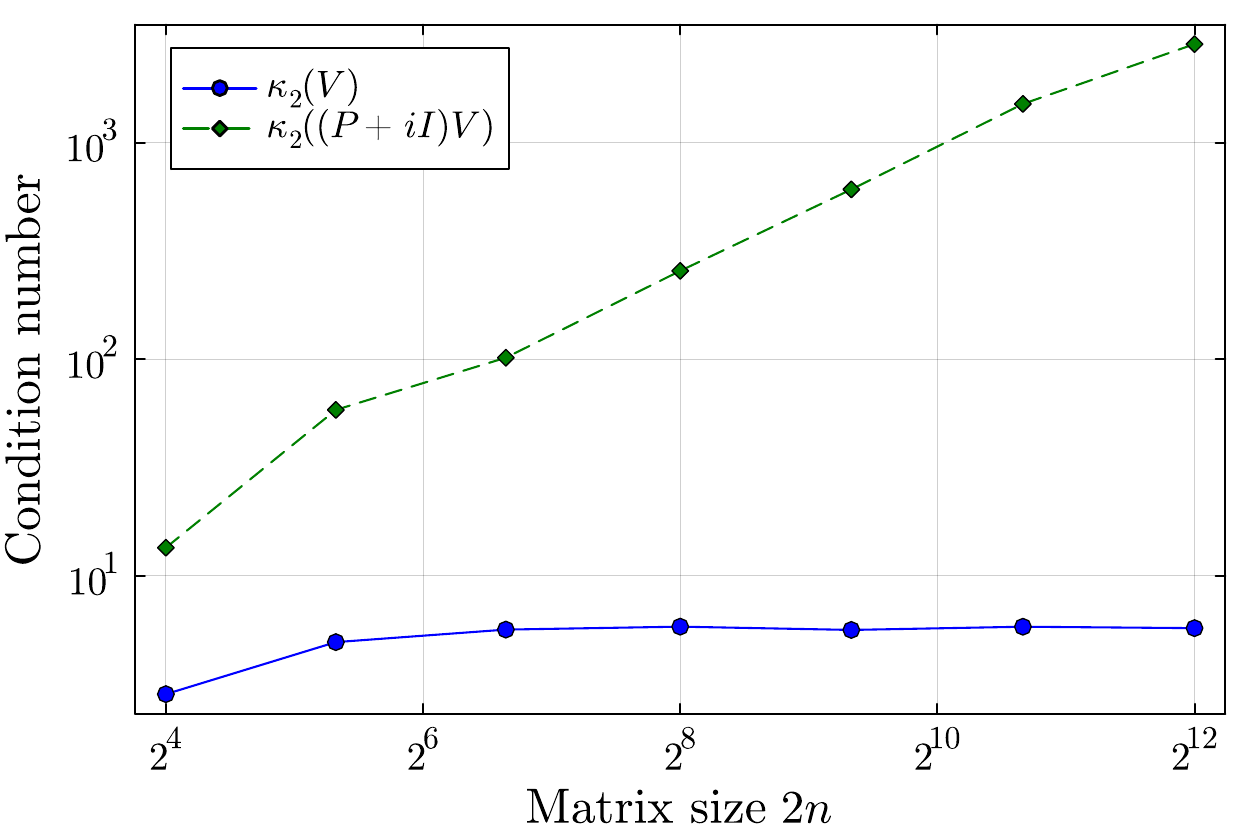}
        \includegraphics[width=0.48\linewidth]{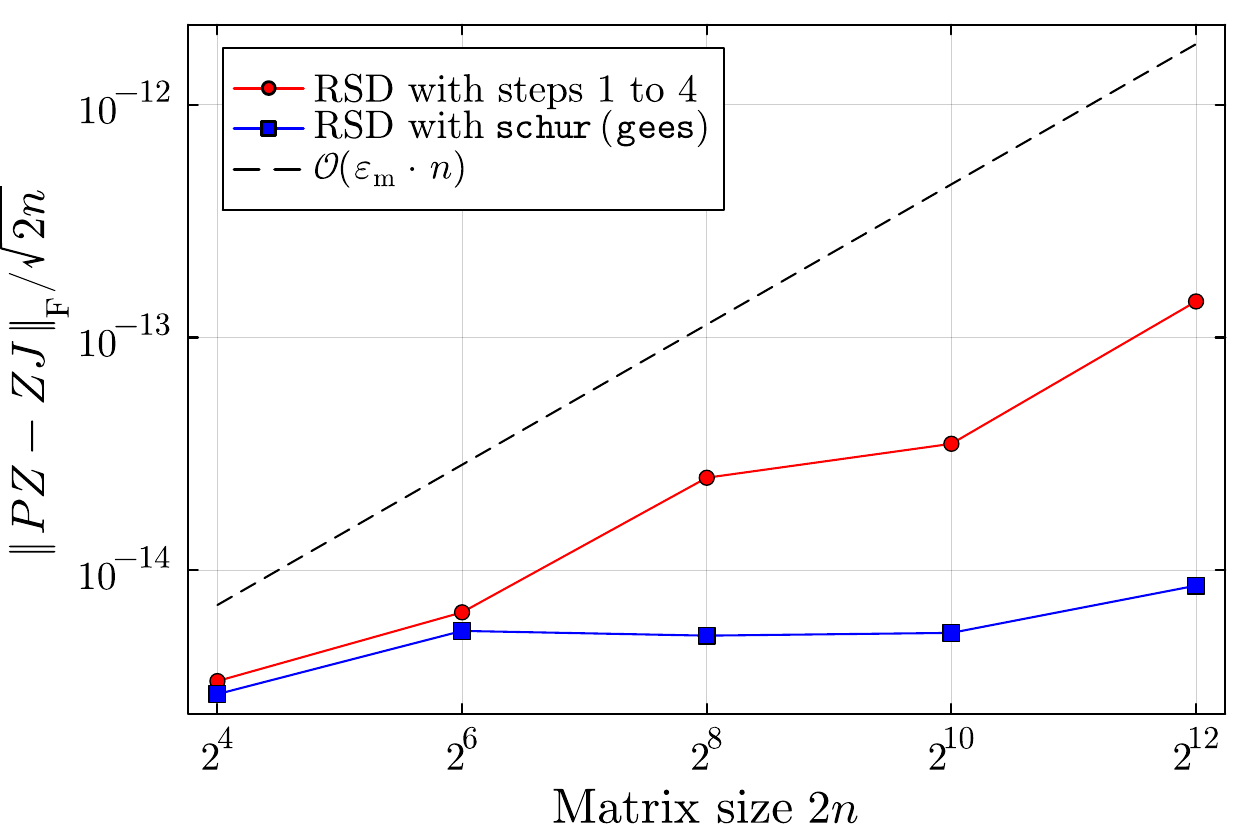}
        \caption{(Left) Evolution of $\kappa_2(V)$ and $\kappa_2((P+\mathrm{i}I_{2n})V)$ where $V = \texttt{randn}(2n,n)$ as $n$ grows. The results are averaged over 100 runs. $\kappa_2(V)$ converges to $\frac{2+\sqrt{2}}{2-\sqrt{2}}$~\cite[Thm.~6.3]{Edelman1988RandomEigenvalues} and $\kappa_2((P+\mathrm{i}I_{2n})V)$ grows linearly with $n$~\cite[Thm.~6.2]{Edelman1988RandomEigenvalues}. (Right) Evolution of the accuracy of the real spectral decomposition of $P$ obtained with steps~1 to 4. It is compared with the routine \texttt{gees} from LAPACK. The results are averaged over 10 runs.}
        \label{fig:conditioning}
    \end{figure}
    
\subsection{Steps~5 to 7: Hamiltonian form and Hermitian EVP} Step~5 consists of matrix multiplications with orthonormal matrices. This operation is numerically stable \cite[Chap.~3.6]{Higham2002}.
To assess the loss of Hamiltonian structure, \cref{tab:hamiltonian_form} measures the numerical stability of \cref{alg:hamiltospectral} up to step 5. As $n$ grows, it shows the evolution of the distance of the numerically computed matrix $A_\mathcal{H} = Z^\top A Z$ to the nearest skew-symmetric Hamiltonian matrix, denoted by $\mathrm{skh}(Z^\top A Z)$; see~\cref{thm:nearest_skh} below.
One observes that the residual grows (mildly) with $n$ but shows little sensitivity
to the conditioning of $A$.
\begin{table}[ht]
    \centering
    \begin{tabular}{||c|c|c|c||}
    \hline
         &\multicolumn{3}{|c||}{$\|A_\mathcal{H}-\mathrm{skh}(A_\mathcal{H})\|_\mathrm{F}/\|A\|_\mathrm{F}$} \\
         \hline
         $2n$&Random matrix&Ill-conditioned matrix&Well-conditioned matrix\\ 
         \hline
         10&8.24e-16&5.92e-16&6.24e-16\\
        100& 5.48e-15& 4.42e-15& 4.61e-15\\
        1000& 2.32e-14& 2.21e-14& 2.95e-14\\
        \hline
    \end{tabular}
    \caption{Numerical experiment on the accuracy of the skew-symmetric Hamiltonian form after step 5 of \cref{alg:hamiltospectral}. The Frobenius distance of $A_\mathcal{H}=Z^\top A Z$ to the nearest skew-symmetric Hamiltonian matrix $\mathrm{skh}(A_\mathcal{H})$ is reported. The results are averaged over 10 runs.}
    \label{tab:hamiltonian_form}
\end{table}

\begin{lemma}
\label{thm:nearest_skh}
Consider a matrix $B \in \mathbb R^{2n\times2n}$ partitioned into $n\times n$ blocks,
$
    B=
    \begin{bmatrix}
        B_{11}&B_{12}\\
        B_{21}&B_{22}
    \end{bmatrix}.
$ 
Then the orthogonal projection of $B$ onto $\mathrm{SkH}(2n)$ is given by
\[
    \mathrm{skh}(B)
    =
    \frac12
    \begin{bmatrix}
        \mathrm{skew}(B_{11}+B_{22})
        &
        -\mathrm{sym}(B_{21}-B_{12})
        \\
        \mathrm{sym}(B_{21}-B_{12})
        &
        \mathrm{skew}(B_{11}+B_{22})
    \end{bmatrix}.
\]
Consequently,
$    \mathrm{skh}(B)
    =
    \argmin_{M\in\mathrm{SkH}(2n)}
    \|B-M\|_{\mathrm F}.
$
\end{lemma}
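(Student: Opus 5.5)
Throughout we work in $\mathbb{R}^{2n\times 2n}$ with the Frobenius (trace) inner product $\langle X,Y\rangle=\mathrm{tr}(X^\top Y)$. Since $\mathrm{SkH}(2n)$ is a linear subspace, the nearest point in Frobenius norm is the orthogonal projection. The argmin statement therefore follows once we show that the displayed map gives the orthogonal projection. To do this we verify two things: first, $\mathrm{skh}(B)\in\mathrm{SkH}(2n)$; second, $B-\mathrm{skh}(B)$ is orthogonal to every $M\in\mathrm{SkH}(2n)$. The Pythagorean identity $\|B-M\|_\mathrm{F}^2=\|B-\mathrm{skh}(B)\|_\mathrm{F}^2+\|\mathrm{skh}(B)-M\|_\mathrm{F}^2$ then gives uniqueness of the minimizer.

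Membership is immediate from the block pattern. The diagonal blocks are both $\frac12\mathrm{skew}(B_{11}+B_{22})\in\mathrm{Skew}(n)$, playing the role of $\Omega$. The off-diagonal blocks are $\mp H$ with $H=\frac12\mathrm{sym}(B_{21}-B_{12})\in\mathrm{Sym}(n)$. This is exactly the form \eqref{eq:sham}.

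For orthogonality, take a general $M=I_2\otimes\Omega+J_2\otimes H$ and expand blockwise:
\[
\langle B,M\rangle=\mathrm{tr}\bigl((B_{11}+B_{22})^\top\Omega\bigr)+\mathrm{tr}\bigl((B_{21}-B_{12})^\top H\bigr).
\]
We then use the elementary fact that $\mathrm{Sym}(n)\perp\mathrm{Skew}(n)$, which gives
\[
\mathrm{tr}(C^\top\Omega)=\mathrm{tr}(\mathrm{skew}(C)^\top\Omega)\quad\text{and}\quad \mathrm{tr}(D^\top H)=\mathrm{tr}(\mathrm{sym}(D)^\top H).
\]
Hence $\langle B,M\rangle=\langle \mathrm{skh}(B),M\rangle$ once we account for the factor: $\mathrm{skh}(B)$ has two copies of each block, each carrying the factor $\frac12$. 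Thus $\langle B-\mathrm{skh}(B),M\rangle=0$ for all $M\in\mathrm{SkH}(2n)$.

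The only step requiring care is this bookkeeping of the factor $\tfrac12$ against the doubled blocks and the signs in $J_2\otimes H$. I would double-check it by confirming idempotence: for $B\in\mathrm{SkH}(2n)$ we have $B_{11}+B_{22}=2\Omega$ and $B_{21}-B_{12}=2H$, so $\mathrm{skh}(B)=B$. Otherwise the argument is routine.
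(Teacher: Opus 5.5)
Your proof is correct, but it takes a different route from the paper's.

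\emph{The paper's route.} It never uses the orthogonality characterization. It parametrizes a general $M\in\mathrm{SkH}(2n)$ by $\Omega$ and $H$ and expands
$\|B-M\|_{\mathrm F}^2=\|B_{11}-\Omega\|_{\mathrm F}^2+\|B_{22}-\Omega\|_{\mathrm F}^2+\|B_{12}+H\|_{\mathrm F}^2+\|B_{21}-H\|_{\mathrm F}^2$.
It then observes that the problem decouples into a least-squares problem for $\Omega$ over $\mathrm{Skew}(n)$ and one for $H$ over $\mathrm{Sym}(n)$, and reads off the minimizers. The projection statement follows because the nearest point of a linear subspace is the orthogonal projection.

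This derives the formula rather than verifying a given candidate. However, it leaves two facts implicit: a sum of two squared distances to a common variable is minimized at the average, and the nearest skew-symmetric (symmetric) matrix to $C$ is $\mathrm{skew}(C)$ ($\mathrm{sym}(C)$). The second fact is itself just $\mathrm{Sym}(n)\perp\mathrm{Skew}(n)$.

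\emph{Your route.} You need the formula in hand, but you establish the orthogonal-projection claim directly via membership plus $B-\mathrm{skh}(B)\perp\mathrm{SkH}(2n)$. You then get the argmin and its uniqueness from the Pythagorean identity, which matches the logical order of the statement's ``Consequently''. You also make the single ingredient $\mathrm{Sym}(n)\perp\mathrm{Skew}(n)$ explicit.

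Your bookkeeping is right: the two copies of each block, each carrying $\tfrac12$, and the signs coming from $J_2\otimes H$ give exactly $\langle \mathrm{skh}(B),M\rangle=\langle B,M\rangle$.
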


\begin{proof}
Every $M\in\mathrm{SkH}(2n)$ has the form
\[
    M=
    \begin{bmatrix}
        \Omega&-H\\
        H&\Omega
    \end{bmatrix},
    \qquad
    \Omega^\top=-\Omega,
    \quad
    H^\top=H.
\]
Hence
\[
    \|B-M\|_{\mathrm F}^2
    =
    \|B_{11}-\Omega\|_{\mathrm F}^2
    +\|B_{22}-\Omega\|_{\mathrm F}^2 
    +\|B_{12}+H\|_{\mathrm F}^2
    +\|B_{21}-H\|_{\mathrm F}^2.
\]
The minimization with respect to $\Omega$ and $H$ separates, and the
unique minimizers are
$
    \Omega
    =
    \frac12\mathrm{skew}(B_{11}+B_{22})$,
    $H
    =
    \frac12\mathrm{sym}(B_{21}-B_{12})$,
which gives the stated formula.
\end{proof}

Step 6 requires a Hermitian eigensolver. Stable Hermitian eigensolvers are available, see, e.g., \href{https://www.netlib.org/lapack/explore-html/d8/d1c/group__heev_gabba143a47eee873cbdb00d685fca08a3.html#gabba143a47eee873cbdb00d685fca08a3}{\texttt{heev}}/\href{https://www.netlib.org/lapack/explore-html/d1/d56/group__heevr_ga5b40f20c2f8ddaeba9345489b408f94e.html#ga5b40f20c2f8ddaeba9345489b408f94e}{\texttt{heevr}}/\href{https://www.netlib.org/lapack/explore-html/d8/d30/group__heevd_ga67c8efc78670b5cf75bb7018ec519a14.html#ga67c8efc78670b5cf75bb7018ec519a14}{\texttt{heevd}} from LAPACK. Finally,
Step~7 consists of
multiplications by numerically orthogonal matrices and does not
significantly amplify the errors already present in $Z$ and $U$.
The numerical accuracy of the complete method is investigated in
\Cref{sec:experiments}.

\subsection{Matrices of odd size}\label{sec:odd} 


In this section, we consider a skew-symmetric matrix of odd size: $A\in\mathrm{Skew}(2n+1)$. Note that $A$ cannot have a skew-symmetric \emph{and} orthogonal polar factor because every odd-sized
skew-symmetric matrix is singular. In the singular case, a polar decomposition (in the sense of~\Cref{def:polar}) is not unique and we will instead consider the so called \emph{canonical partial polar decomposition}~\cite[Theorem 8.3]{Higham2008} defined by
\[
 A = P Y, \quad Y = (A^\top A)^{1/2}, \quad P = A Y^\dagger.
\]
Note that $P$ is skew-symmetric and a partial isometry, but it is no longer orthogonal.
QDWH can also be used to compute the canonical partial polar factor
of a rank-deficient matrix, provided that some care is taken in its implementation; see \cite[Secs.~5.4--5.5]{NakatsukasaHigham2013}.

Generically, $\dim\ker(A)=1$, and we focus on this case.\footnote{If $\dim(\ker(A))>1$, a sufficiently small random
skew-symmetric perturbation reduces the nullity to one with probability one.}
 By definition, 
$P^\top P$ is the orthogonal projector onto $\mathrm{range}(A^\top)
=\ker(A)^\perp$. Hence, for a unit vector $z$ such that $Az = 0$, we have 
$-P^2=P^\top P=I_{2n+1}-zz^\top$,
and thus $I_{2n+1}+P^2=zz^\top$.
Consequently, for a random vector $x\in\mathbb R^{2n+1}$,
\[
    z=
    \frac{(I_{2n+1}+P^2)x}
         {\|(I_{2n+1}+P^2)x\|_2}
\]
holds, up to sign, with probability one.
On $\text{range}(z)^\perp$, the partial isometry $P$ is orthogonal. Therefore, the construction of \cref{thm:qfactor} can be
applied on this subspace. Concretely, we sample
$\widehat V\in\mathbb R^{(2n+1)\times n}$ with independent standard
normal entries and set
\[
    V=(I_{2n+1}-zz^\top)\widehat V.
\]
Then define
\[
    \widetilde V
    =\texttt{qf}\bigl((P+\mathrm{i}I_{2n+1})V\bigr),
    \qquad
    Z=\sqrt{2}
    \bigl[\Re(\widetilde V)\mid-\Im(\widetilde V)\bigr].
\]
The matrix $[\,Z\mid z\,]$ is orthogonal, $Az=0$, and
$Z^\top AZ$ is skew-symmetric Hamiltonian. Hence Steps~5--7 of
\cref{alg:hamiltospectral}, applied to $Z^\top AZ$, yield an RSD of
$A$ after appending the zero eigenvalue associated with $z$.

Our implementation of \cref{alg:hamiltospectral}
applies the additional steps explained above to compute an RSD for a skew-symmetric matrix of odd size.%

\section{Numerical experiments}\label{sec:experiments}
This section assesses the accuracy and running time of
\cref{alg:hamiltospectral} and compares it with alternative approaches
for solving skew-symmetric EVPs. We consider the following methods:
\begin{enumerate}
    \item \cref{alg:hamiltospectral}, using QDWH (see \Cref{sec:app_polar_factor}) to compute the polar
    factor unless stated otherwise.
    \item An RSD based on skew-symmetric tridiagonalization followed by
    a bidiagonal SVD. The tridiagonalization is performed with the
    level~3 BLAS implementation available in the Julia package
    \href{https://github.com/JuliaLinearAlgebra/SkewLinearAlgebra.jl}{SkewLinearAlgebra.jl}.
    \item An RSD obtained from the Julia routine \texttt{schur}, which
    calls \texttt{gees} from LAPACK.
    \item An EVD of the Hermitian matrix $\mathrm{i}A$, computed with
    the Julia routine \texttt{eigen}.
    This approach exploits that $\mathrm{i}A$ is Hermitian, but it produces a complex
eigenbasis rather than an RSD of $A$. Recovering an RSD requires additional postprocessing, particularly in the presence of multiple or tightly clustered eigenvalues.
\end{enumerate}

\subsection{Numerical experiments on the accuracy}
We investigate the accuracy of \cref{alg:hamiltospectral} as the
matrix size and conditioning vary. The matrix size ranges from $16$
to $4096$, and the test matrices are sampled from Distributions~1
and~2 of \Cref{sec:app_polar_factor}. The residuals reported in \cref{fig:hamilto_spectral} remain small over
the entire range.

\begin{figure}[ht]
    \centering
    \includegraphics[width=0.48\linewidth]{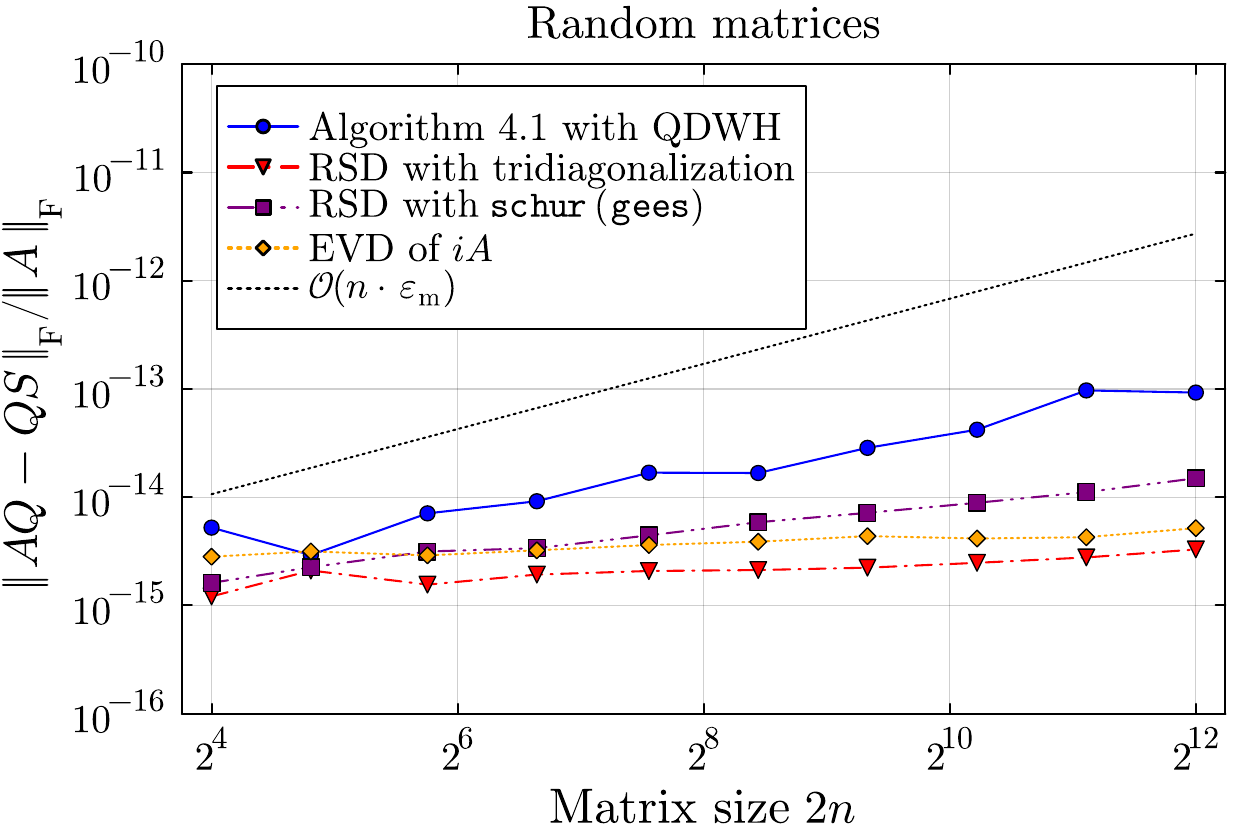}
    \includegraphics[width = 0.48 \linewidth]{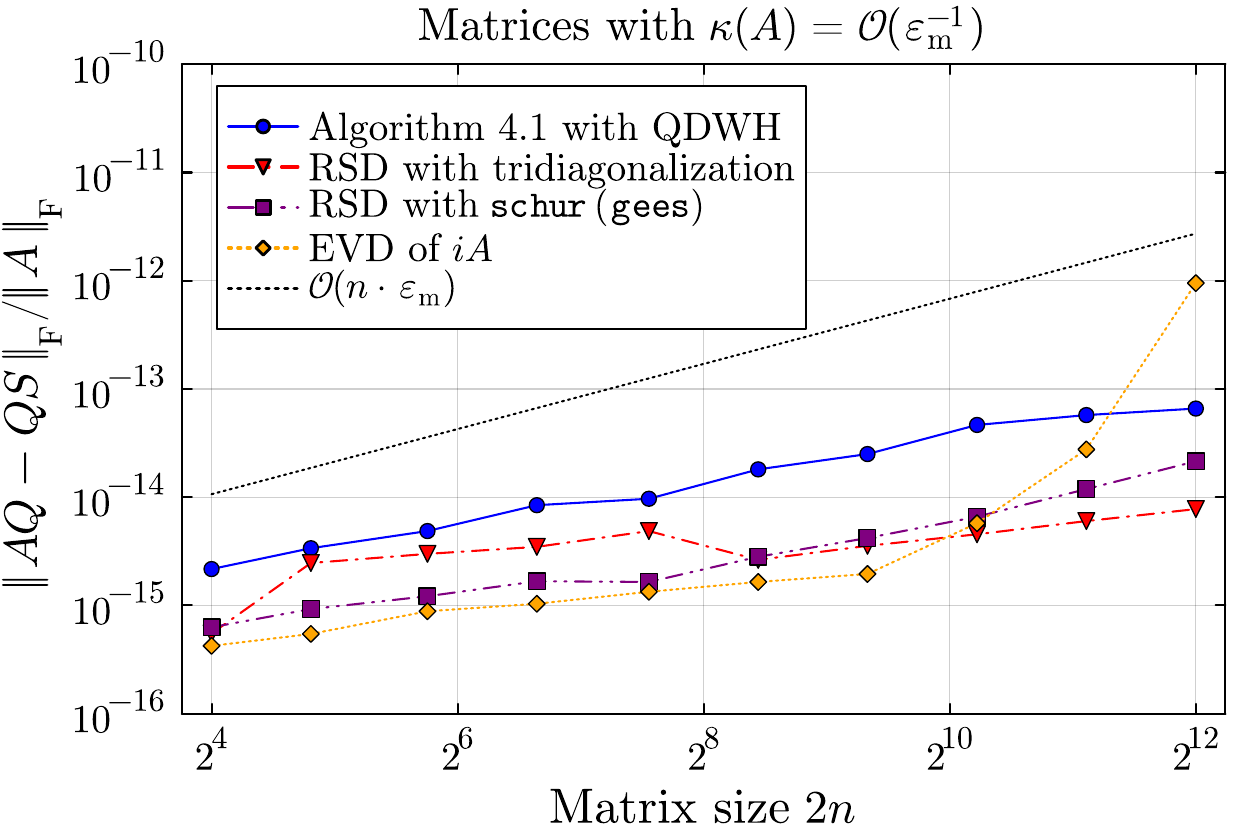}
    \caption{Comparison of the numerical accuracy of \cref{alg:hamiltospectral} with the other algorithms for computing an RSD or an EVD. The test matrices are sampled from the distributions of \Cref{sec:app_polar_factor}. The results are averaged over 10 runs.}
    \label{fig:hamilto_spectral}
\end{figure}    
\subsection{Running time experiments}

We compare the running times of \cref{alg:hamiltospectral} with the
alternative approaches on random skew-symmetric matrices of increasing
size. The results are shown in \cref{fig:time}. All timings were obtained on a processor i7-8750H CPU @ 2.20GHz using Julia 1.12.3 with OpenBLAS. The number of BLAS threads used in each experiment is specified in the corresponding figure or table caption.

\cref{fig:time} shows that \cref{alg:hamiltospectral} competes with the alternative approaches for sufficiently large matrices. However, in this experimental setting and with this implementation, it does not outperform them.
\begin{figure}[ht]
    \centering
    \includegraphics[width=0.48\linewidth]{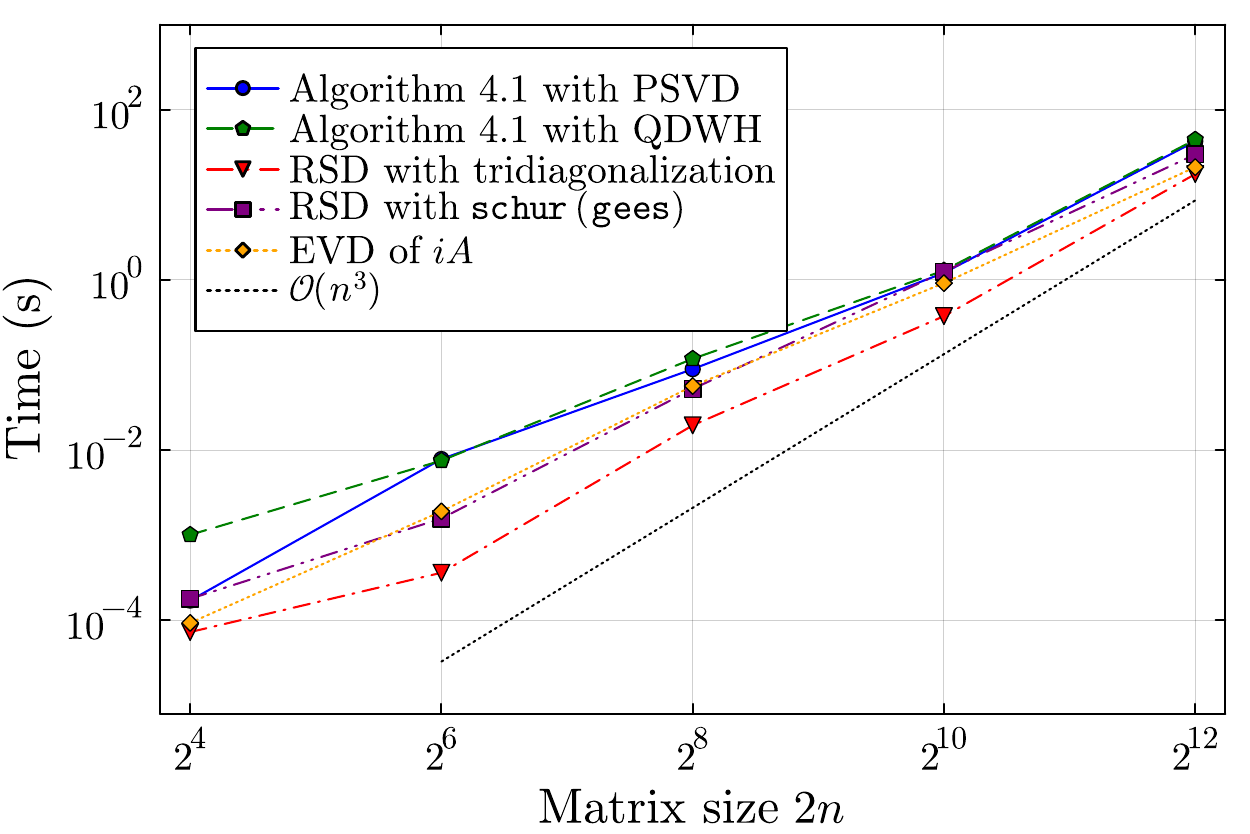}
    \includegraphics[width = 0.48\linewidth]{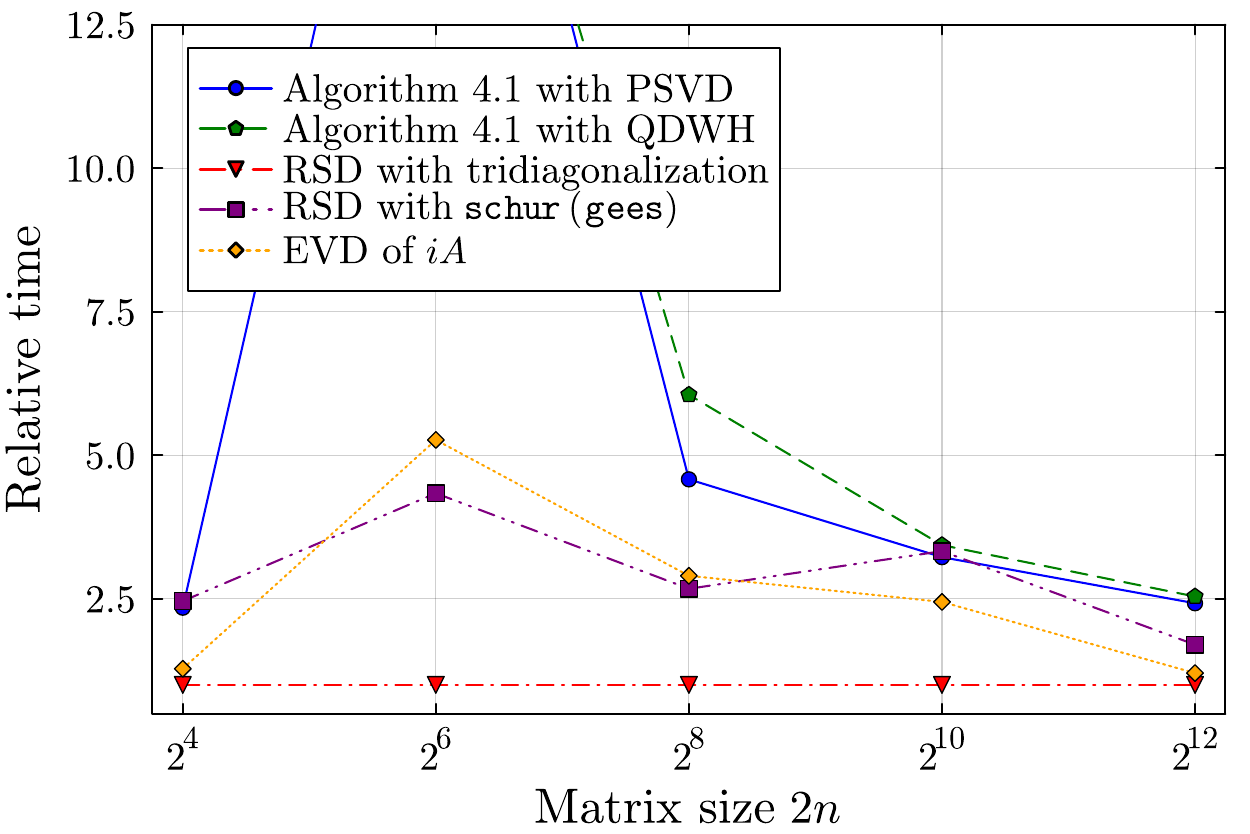}
    \caption{Evolution of the running times of the different algorithms  for $2n\times 2n$ random skew-symmetric matrices with 6 BLAS threads. On the left, the absolute running time in seconds. On the right, the relative running time compared to RSD with tridiagonalization. The results are averaged over 10 runs.}
    \label{fig:time}
\end{figure}

In \cref{fig:pies_time}, we evaluate the proportion of time allocated to each step of \cref{alg:hamiltospectral}. In order to increase readability, the different steps are grouped by ``main operation''. The results are displayed as pie charts. In our setting, the computation of the polar factor accounts for about
$60\%$--$75\%$ of the total running time. Since its main computational
kernels are highly amenable to parallelization~\cite{Ltaief19}, this
breakdown suggests that the overall method could benefit substantially
from a parallel implementation.

\begin{figure}
    \centering
    \includegraphics[width=\linewidth]{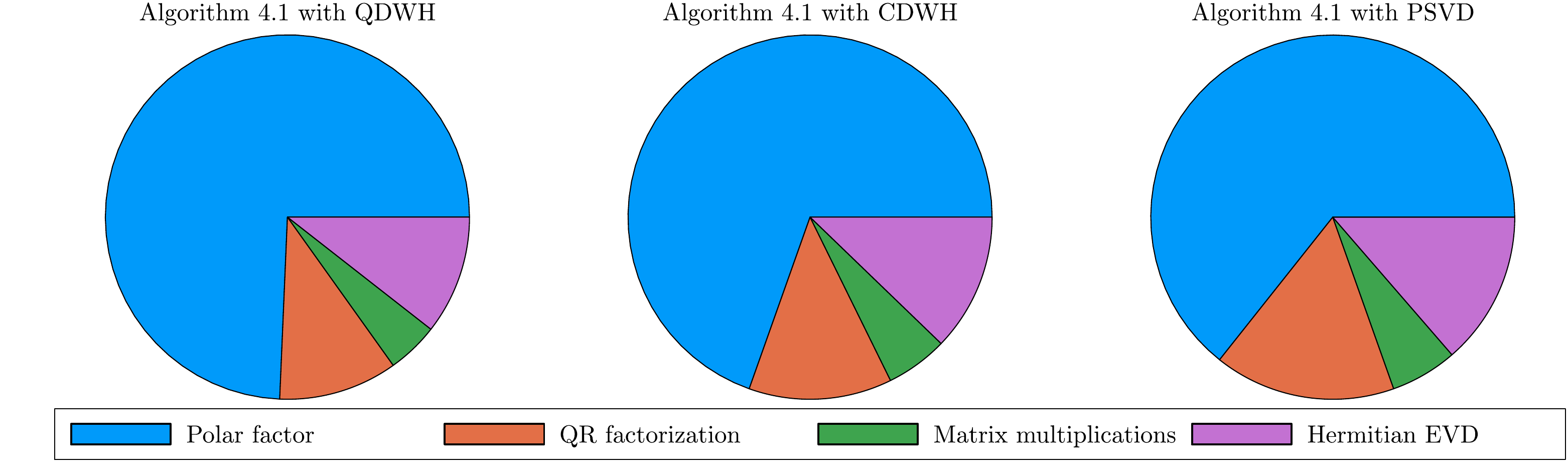}
    
    \vspace{-0.3cm}
    
    \caption{Proportion of time spent on each step of \cref{alg:hamiltospectral} for $2000\times 2000$ random skew-symmetric matrices with $4$ BLAS threads. ``Polar factor'' corresponds to step~1, ``QR factorization'' corresponds to steps~2 to~4. ``Matrix multiplications'' corresponds to steps~5 and~7. ``Hermitian EVD'' corresponds to step~6. Step~1 is implemented either with QDWH, CDWH or PSVD (see \Cref{sec:app_polar_factor}). The results are averaged over 10 runs.}
    \label{fig:pies_time}
\end{figure}

\subsection{Application: simulation of hyperbolic partial differential equations}

We evaluate the performance of \cref{alg:hamiltospectral} on skew-symmetric matrices obtained from an application. Skew-symmetry is an important structure for conservation laws in the simulation of hyperbolic partial differential equations (PDEs), for example when using skew-symmetric formulations~\cite{DucrosEtAl2000}, summation-by-parts schemes~\cite{SvardNordstrom2014,Gassner2013} or geometric numerical integration~\cite{HairerLubichWanner2006,IserlesMuntheKaasNorsettZanna2000}.
Here, we focus on linear semidiscrete systems whose evolution can be
expressed in terms of a matrix exponential; see, e.g.,
\cite{HochbruckOstermann2010}.

Linear hyperbolic PDEs can often be discretized in space as linear
differential equations
\[
    \frac{\mathrm d}{\mathrm dt}u^{(N)}(t)
    +A^{(N)}u^{(N)}(t)=0,
    \qquad
    A^{(N)}\in\mathrm{Skew}(N),
\]
whose solution is
$
    u^{(N)}(t)=\exp(-tA^{(N)})u^{(N)}(0).
$
An RSD of $A^{(N)}$, computed for example with
\cref{alg:hamiltospectral}, therefore allows the semidiscrete solution
to be evaluated directly for arbitrary times.

As an illustration, consider the linear advection equation in
conservative form
\begin{equation}\label{eq:transport}
    \frac{\partial u}{\partial t}
    +\nabla\cdot(cu)=0,
    \qquad
    u(x,0)=f(x),
\end{equation}
on a periodic domain $\mathcal D$. Writing $u=w^2$ yields the
skew-adjoint formulation
\begin{equation}\label{eq:transport_w}
    \frac{\partial w}{\partial t}
    +\frac12\bigl(c\cdot\nabla w+\nabla\cdot(cw)\bigr)=0.
\end{equation}
Indeed, every solution of \eqref{eq:transport_w} gives a solution
$u=w^2$ of \eqref{eq:transport}. In one space dimension, let $D$
denote a periodic centered finite-difference differentiation matrix and
$C=\diag(c(x_1),\ldots,c(x_N))$. Since $D^\top=-D$, the matrix
$
    A^{(N)}=\frac12(CD+DC)
$
corresponding to this discretization of~\eqref{eq:transport} is skew-symmetric.

\Cref{fig:transport1d} compares the resulting RSD-based solution with
Crank--Nicolson and with a reference solution obtained by the method of
characteristics. We use
\[
    w(x,0)=e^{-x^2/2}\cos(\pi x),
    \qquad
    u(x,0)=w(x,0)^2=e^{-x^2}\cos^2(\pi x).
\]
The RSD evaluates the semidiscrete evolution exactly in time and hence
preserves $\|w^{(N)}(t)\|_2$. While Crank--Nicolson is also norm-preserving
and unconditionally stable for this skew-symmetric system, it 
introduces numerical dispersion.

Finally, \cref{tab:transport_performance} reports the cost and
residual of \cref{alg:hamiltospectral} for the matrices
$A^{(N)}$ arising from this discretization. Although these matrices are
sparse, this example illustrates the structure-preserving properties
of the proposed RSD algorithm; a full decomposition can be useful when
the evolution needs to be evaluated for many times or initial
conditions.

\begin{figure}[ht]
    \centering
    \includegraphics[width=\linewidth]{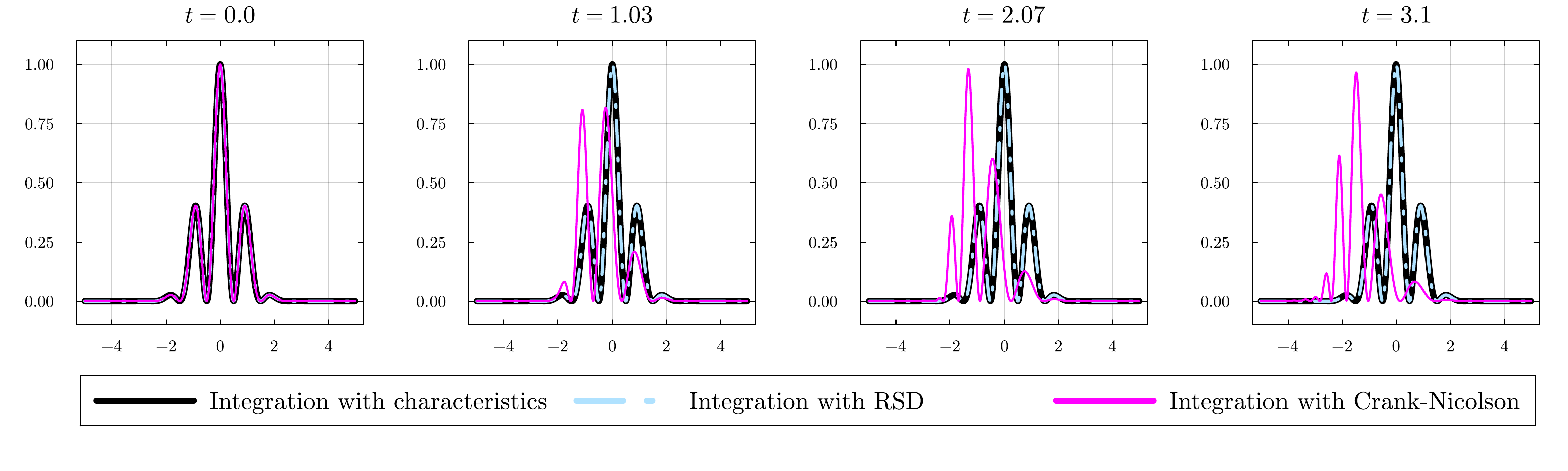}

    \vspace{-0.3cm}
    
\caption{Numerical solutions to~\eqref{eq:transport} obtained from the
skew-adjoint formulation \eqref{eq:transport_w} on the periodic domain
$\mathcal D=[-L,L)$, with $L=5$,
$c(x)=2L+\frac{L}{2}\cos(2\pi x/L)$, and
$w(x,0)=e^{-x^2/2}\cos(\pi x)$. We use $N=500$ grid points and a
sixth-order centered finite-difference discretization. The reference
solution is obtained by the method of characteristics.
    \label{fig:transport1d}}
\end{figure}


\begin{table}[ht]
    \centering
    \begin{tabular}{||c|c|c||}
        \hline
         $N$& $\|A^{(N)}Q - QS\|_\mathrm{F}/\|A^{(N)}\|_\mathrm{F}$& Running time (s)  \\
         \hline
         10&1.92e-15 & 1.78e-4 \\ 
         100& 6.27e-15& 1.66e-2 \\
         1000& 4.54e-14& 1.36\\
         \hline
    \end{tabular}
    
    \caption{Performance of \cref{alg:hamiltospectral} using QDWH with
four BLAS threads for computing an RSD of the discrete operator
$A^{(N)}$.}
    \label{tab:transport_performance}
\end{table}

\section{Halving the size of orthogonal eigenvalue problems}\label{sec:special_orthogonal}

Analogously to the transformation of a skew-symmetric matrix to
skew-symmetric Hamiltonian form, an orthogonal matrix can be transformed
into an orthogonal symplectic matrix. The resulting eigenvalue problem is
equivalent to a unitary eigenvalue problem of half the size.

We first consider the case
\begin{equation}
    X\in\mathrm{SO}(2n),
    \quad 
    \text{$X$ has no eigenvalues $\pm 1$}.
    \label{eq:orthogonal-generic-assumption}
\end{equation}
The spectral condition in
\eqref{eq:orthogonal-generic-assumption} holds on an open dense subset
of full Haar measure\footnote{Indeed, the exceptional set is the zero set on $\mathrm{SO}(2n)$ of
the polynomial $
    X\longmapsto \det(X-I_{2n})\det(X+I_{2n}),
$
and therefore has Haar measure zero.} in $\mathrm{SO}(2n)$.

Under assumption~\eqref{eq:orthogonal-generic-assumption}, there exist
$Q\in\mathrm{O}(2n)$ and
$\Sigma=\operatorname{diag}(\theta_1,\ldots,\theta_n)$, with $0<\theta_j<\pi$,
such that
\begin{equation}
    X
    =
    Q
    \begin{bmatrix}
        C & -S\\
        S & C
    \end{bmatrix}
    Q^\top,
    \qquad
    C\coloneq\cos(\Sigma),
    \quad
    S\coloneq\sin(\Sigma);
    \label{eq:orthogonal_rsd}
\end{equation}
see, e.g., \cite[Corollary 2.5.11 (c)]{Horn_Johnson_2012}.
In particular, $S\succ0$ and $C^2+S^2=I_n$.
In this section only,
$S$ denotes the diagonal matrix of sines rather than the spectral
form~\eqref{eq:spectral_decomposition}.
Note that $\left[\begin{smallmatrix}
        C&-S\\
        S&C
    \end{smallmatrix}\right]$ is in the orthogonal symplectic group $\mathrm{OSp}(2n)$. Therefore, for every $Z = QM$ with $M\in\mathrm{OSp}(2n)$, we have
\begin{equation*}
    Z^\top X Z = M^\top \begin{bmatrix}
        C&-S\\
        S&C
    \end{bmatrix} M=\begin{bmatrix}
        \widetilde{U}_\mathrm{r}&-\widetilde{U}_\mathrm{i}\\
        \widetilde{U}_\mathrm{i}&\widetilde{U}_\mathrm{r}
    \end{bmatrix}\in \mathrm{OSp}(2n).
\end{equation*}
Finding an RSD of the orthogonal symplectic matrix on the right-hand side is equivalent to solving the unitary EVP $\widetilde U = \widetilde{U}_\mathrm{r} + \mathrm{i} \widetilde{U}_\mathrm{i}$ of size $n$. Specialized QR algorithms for unitary matrices are available; see~\cite{Gragg1986, Stewart2006}. More recently, a simple randomized algorithm that solves the unitary EVP through a Hermitian EVP of size $n$ was presented in~\cite{HeKressner2025}.

Instead of starting from~\eqref{eq:orthogonal_rsd}, the required transformation of $X$ to orthogonal symplectic form can also be obtained from the skew-symmetric part of $X$.

\begin{proposition}
\label{prop:orthogonal-to-symplectic}
Let $X$ satisfy~\eqref{eq:orthogonal-generic-assumption}, and set
$A\coloneq\operatorname{skew}(X)$. Then $P\coloneq A(-A^2)^{-1/2} \in\mathrm{Skew}(2n)\cap\mathrm{O}(2n)$ is the unique
orthogonal polar factor of $A$.
If $Z\in\mathrm{O}(2n)$ satisfies
$P=ZJ_{2n}Z^\top$, then
$
    Z^\top XZ\in\mathrm{OSp}(2n).
$
\end{proposition}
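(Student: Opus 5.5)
The plan is to work entirely in the spectral basis from~\eqref{eq:orthogonal_rsd}. Write $X = QTQ^\top$ with $T = \left[\begin{smallmatrix} C & -S\\ S & C\end{smallmatrix}\right] = I_2\otimes C + J_2\otimes S$. Since $I_2\otimes C$ is symmetric and $J_2\otimes S$ is skew-symmetric, we have
\[
A = \mathrm{skew}(X) = Q(J_2\otimes S)Q^\top .
\]
Because $S\succ 0$ by assumption~\eqref{eq:orthogonal-generic-assumption}, this is an RSD of an invertible matrix $A$.

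Then $-A^2 = Q(I_2\otimes S^2)Q^\top$ is symmetric positive definite, so
\[
(-A^2)^{-1/2} = Q(I_2\otimes S^{-1})Q^\top .
\]
Using $J_2\otimes S = J_{2n}(I_2\otimes S)$, as in the proof of \cref{lem:polar_decomposition}, I get
\[
P = Q J_{2n}(I_2\otimes S)(I_2\otimes S^{-1})Q^\top = QJ_{2n}Q^\top .
\]
This $P$ is manifestly skew-symmetric and orthogonal. Moreover $A = P\,\bigl(Q(I_2\otimes S)Q^\top\bigr)$ with a positive definite second factor, so $P$ is an orthogonal polar factor. Uniqueness follows from invertibility of $A$ (the cited uniqueness in \cite[Chap.~9.4.3]{golubvanloan}); equivalently, \cref{lem:polar_decomposition} applies directly.

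For the second claim, I mimic the argument at the start of \Cref{sec:hamiltonian_reduction}. If $P = ZJ_{2n}Z^\top$, set $M := Q^\top Z\in\mathrm{O}(2n)$. Comparing with $P = QJ_{2n}Q^\top$ gives $MJ_{2n}M^\top = J_{2n}$, so $M\in\mathrm{OSp}(2n)$. Then $Z^\top X Z = M^\top T M$. Here $T$ commutes with $J_{2n}$, because both $I_2\otimes C$ and $J_2\otimes S$ commute with $J_2\otimes I_n$. Hence $M^\top T M$ is orthogonal and commutes with $J_{2n}$, i.e., it lies in $\mathrm{OSp}(2n)$. This uses that $\mathrm{OSp}(2n)$ is a group, with $M^\top = M^{-1}\in\mathrm{OSp}(2n)$.

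There is no real obstacle here. The only point needing care is the identification $(-A^2)^{-1/2} = Q(I_2\otimes S^{-1})Q^\top$. This relies on $S\succ0$, which in turn needs $\theta_j\in(0,\pi)$; that is exactly where the assumption of no eigenvalues $\pm1$ enters. It also guarantees that $A$ is invertible, which is what makes the polar factor unique.
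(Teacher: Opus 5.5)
Your proof is correct, but it takes a different route from the paper's.

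\textbf{Your route.} You work in the real normal form \eqref{eq:orthogonal_rsd} of $X$. From it you read off $A=Q(J_2\otimes S)Q^\top$ and compute $P=QJ_{2n}Q^\top$ explicitly. You then reuse the change-of-basis argument from \Cref{sec:hamiltonian_reduction}: $M=Q^\top Z$ is orthogonal symplectic, and $T=I_2\otimes C+J_2\otimes S$ commutes with $J_{2n}$.

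\textbf{The paper's route.} The paper never decomposes $X$. Nonsingularity of $A$ gives $A^\top A=-A^2\succ 0$, hence uniqueness of the polar factor. Skew-symmetry of $P$ follows because $A$ commutes with $(-A^2)^{-1/2}$. For the second claim, the paper observes that $A=(X-X^\top)/2$ commutes with $X$, since $X$ is normal. Therefore $P$, being a function of $A$, commutes with $X$, and $J_{2n}Z^\top XZ=Z^\top PXZ=Z^\top XPZ=Z^\top XZ\,J_{2n}$.

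\textbf{Comparison.} The paper's argument is basis-free and needs no spectral decomposition of $X$. It shows that the only property used is $PX=XP$. Your argument relies on the existence of \eqref{eq:orthogonal_rsd} with $\theta_j\in(0,\pi)$. In return it shows explicitly that $P$ and $X$ share the spectral basis $Q$. That is precisely why any $Z$ with $P=ZJ_{2n}Z^\top$ differs from $Q$ by an orthogonal symplectic factor.

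\textbf{A minor remark.} Your aside that \cref{lem:polar_decomposition} gives uniqueness ``equivalently'' is not accurate. That lemma only characterizes skew-symmetric orthogonal polar factors, so it cannot exclude non-skew orthogonal polar factors. Your primary uniqueness argument, via invertibility of $A$, is the right one and is exactly what the paper uses.
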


\begin{proof}
The assumption that $X$ has no eigenvalues $\pm1$ implies that
$A=\operatorname{skew}(X)$ is nonsingular. Since $A^\top=-A$, we have
$
    A^\top A=-A^2\succ0,
$
and hence $P=A(-A^2)^{-\frac{1}2}$ is the unique orthogonal polar factor of
$A$. Because $A$ commutes with $(-A^2)^{-\frac12}$, it follows that $P$ is
skew-symmetric.

To prove the second part, we first note that
$
    A=(X-X^{\top})/2
$
commutes with $X$ and therefore $P$, being a matrix function of
$A$, also commutes with $X$. Therefore,
\[
    J_{2n}(Z^\top XZ)
    =
    Z^\top PXZ
    =
    Z^\top XPZ
    =
    (Z^\top XZ)J_{2n}.
\]
Since $Z^\top XZ$ is orthogonal, it belongs to
$\mathrm{OSp}(2n)$.
\end{proof}

The matrix $P$ from \Cref{prop:orthogonal-to-symplectic} can alternatively be obtained from the matrix sign function.
Indeed, assumption~\eqref{eq:orthogonal-generic-assumption} implies that
$\mathrm{i}X$ has no eigenvalues on the imaginary axis, so its matrix
sign is well defined and it readily follows that 
\begin{equation}
    P
    =
    -\mathrm{i}\,\operatorname{sign}(\mathrm{i}X)
    =
    X(-X^2)^{-\frac12}. 
    \label{eq:skew_sign}
\end{equation}
This sign-based representation can be evaluated,
for example, by the Halley iteration%
\begin{equation}
    P_{k+1}
    =
    P_k(3I_{2n}-P_k^2)(I_{2n}-3P_k^2)^{-1},
    \qquad
    P_0=X.
    \label{eq:sign_iteration}
\end{equation}
Under assumption~\eqref{eq:orthogonal-generic-assumption}, this
iteration converges to $P$ and preserves the group structure:
$P_k\in\mathrm{SO}(2n)$ for $k=0,1,\ldots$; see~\cite{HMMT04}.
However, the factor $I - 3P_k^2$ is neither symmetric nor orthogonal in general, making it difficult to exploit structure when applying its inverse. In contrast, computing a skopf of $\mathrm{skew}(X)$ with one of the iterative methods from \Cref{sec:app_polar_factor} preserves the skew-symmetry of the iterates but orthogonality is only achieved upon convergence.

Let $P = ZJ_{2n} Z^\top$ and $Z\eqcolon [Z_1\ | \ Z_2]$, $\widetilde{U}_\mathrm{r}\coloneq  Z_1^\top (X Z_1)$ and $\widetilde{U}_\mathrm{i}\coloneq Z_2^\top (X Z_1) $. The next step is solving the unitary EVP of $\widetilde{U}_\mathrm{r} + \mathrm{i}\widetilde{U}_\mathrm{i}= U(C+\mathrm{i}S)U^*$ where $U = U_\mathrm{r} + \mathrm{i} U_\mathrm{i}$. Specialized QR algorithms exploit the compact representation of a
unitary Hessenberg matrix and yield fast and stable iterations; see
\cite{Gragg1986,Stewart2006}. In our experiments, however, we use \texttt{schur} (\texttt{gees} from LAPACK) because a competitive implementation of these algorithms is currently unavailable in Julia.


Finally, we obtain the spectral decomposition

\begin{equation*}
    X = Z \begin{bmatrix}
        U_\mathrm{r}&-U_\mathrm{i}\\
        U_\mathrm{i}&U_\mathrm{r}
    \end{bmatrix}\begin{bmatrix}
        I_n&0\\
        0&\mathrm{sign}(S)
    \end{bmatrix}\begin{bmatrix}
        C&-|S|\\
        |S|&C
    \end{bmatrix}\begin{bmatrix}
        I_n&0\\
        0&\mathrm{sign}(S)
    \end{bmatrix}^\top\begin{bmatrix}
        U_\mathrm{r}&-U_\mathrm{i}\\
        U_\mathrm{i}&U_\mathrm{r}
    \end{bmatrix}^\top Z^\top.
\end{equation*}
A pseudo-code of the method for special orthogonal matrices is given in \cref{alg:symplectospectral}. 

\begin{algorithm}[ht]
    \caption{RSD of special orthogonal matrix}
    \begin{algorithmic}
        \STATE \textbf{Input:} $X\in\mathrm{SO}(2n)$  with no eigenvalue $\{\pm1\}$.
        \STATE \textbf{Output:} $Q\in\mathrm{O}(2n)$ and $C,S\in\mathbb{R}^{n\times n}$ diagonal s.t.~$X = Q (I_2\otimes C +J_2 \otimes S )Q^\top$.
        \STATE \textbf{step 1:} Compute a skew-symmetric orthogonal polar factor $P$ of $\mathrm{skew}(X)$.
        \STATE \textbf{step 2:} Sample $V \in \mathbb R^{2n\times n}$ with independent standard normal entries. 
        \STATE \textbf{step 3:} Compute an orthonormal factor $\widetilde{V} = \texttt{qf}((P + \mathrm{i}I_{2n})V)\in\mathbb{C}^{2n\times n}$.
        \STATE \textbf{step 4:} Define $Z_1 = \sqrt{2} \Re(\widetilde{V})$ and $Z_2 = -\sqrt{2} \Im(\widetilde{V})$.
        \STATE \textbf{step 5:} Compute $\widetilde{U}_\mathrm{r}\coloneq  Z_1^\top (X Z_1)$ and $\widetilde{U}_\mathrm{i}\coloneq Z_2^\top (X Z_1) $.
        \STATE \textbf{step 6:} Solve the unitary EVP of $\widetilde{U}_\mathrm{r} + \mathrm{i}\widetilde{U}_\mathrm{i}= U(C+\mathrm{i}S)U^*$ where $U = U_\mathrm{r} + \mathrm{i} U_\mathrm{i}$.
        \STATE \textbf{step 7:} Let $Q \coloneq  [Z_1U_\mathrm{r} + Z_2U_\mathrm{i}\ |\ (-Z_1 U_\mathrm{i} + Z_2 U_\mathrm{r})\mathrm{sign}(S)]$ and $S\leftarrow |S|$.
        \RETURN $Q,C$ and $S$.
    \end{algorithmic}
    \label{alg:symplectospectral}
\end{algorithm}

In \cref{fig:test_symplectospectral}, it is shown experimentally that the method is stable and competes in terms of speed and accuracy with other algorithms. The test matrices are sampled from a Haar-distribution on~$\mathrm{SO}(2n)$ where $2n$ varies from $8$ to $2048$. The comparison is made with \texttt{schur} (\texttt{gees} from LAPACK) and \texttt{RandDiag} from~\cite{HeKressner2025}.
The residual of \texttt{RandDiag} appears to grow proportionally with
$n^2$, consistently with \cite[Fig.~1]{HeKressner2025}, whereas the
residual of \cref{alg:symplectospectral} exhibits a much milder growth,
approximately proportional to $\sqrt n$ in these experiments.

\begin{figure}[ht]
    \centering
    \includegraphics[width=0.48\linewidth]{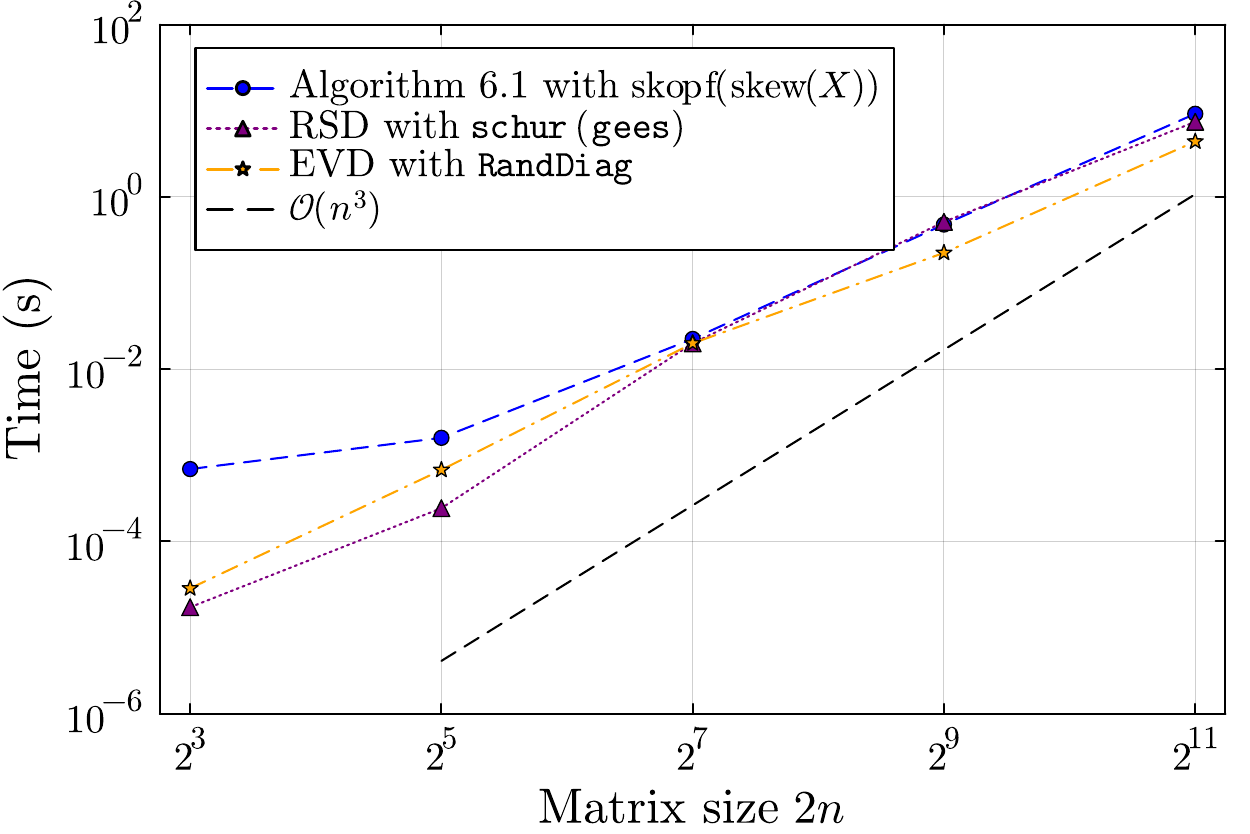}
    \includegraphics[width = 0.48\linewidth]{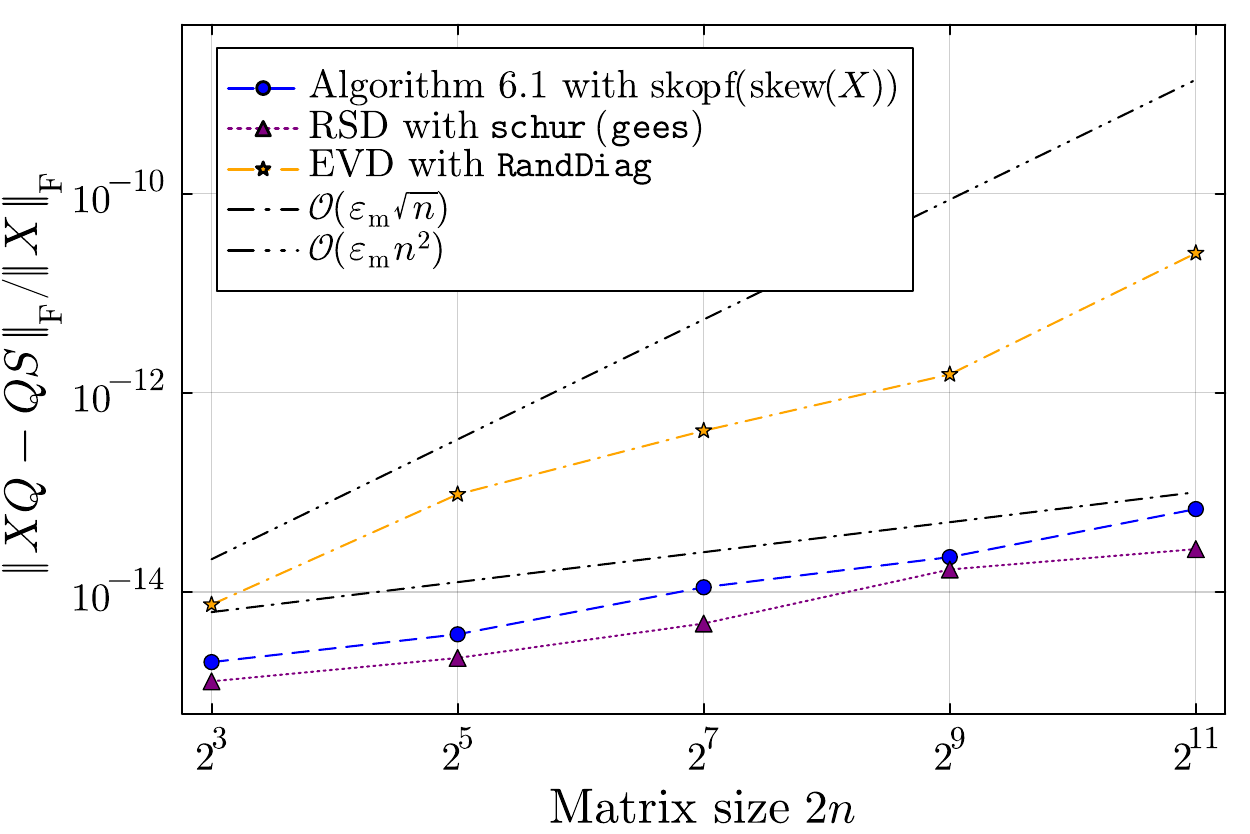}
    \caption{On the left, a comparison of the running time of \cref{alg:symplectospectral} with the Julia routine \texttt{schur} (\texttt{gees} from LAPACK) and the method \texttt{RandDiag} from~\cite{HeKressner2025}. $6$ BLAS threads are used. On the right, a comparison of the numerical accuracy as the size $2n$ of the matrices varies. In the label of the y-axis, $S$ refers to the spectral form~\eqref{eq:spectral_decomposition}. The matrices are sampled from a Haar distribution on $\mathrm{SO}(2n)$. The results are averaged over 10 runs.}
    \label{fig:test_symplectospectral}
\end{figure}

\begin{remark}[Eigenvalues $\pm1$]
The techniques from \Cref{sec:odd} extend to general orthogonal matrices $X$ having
eigenvalues $\pm1$. Indeed,
\[
    \ker(\mathrm{skew}(X))
    =
    \ker(X-I)\oplus\ker(X+I).
\]
Thus, one may compute an orthonormal basis $N$ of
$\ker(\mathrm{skew}(X))$, replace the projection $I-zz^\top$ used in
\Cref{sec:odd} by $I-NN^\top$, and apply the preceding construction
on the orthogonal complement of $\mathrm{range}(N)$. The two real
eigenspaces can be separated by diagonalizing the small symmetric
matrix $N^\top XN$, whose eigenvalues are $\pm1$. 
The basis $N$ can be extracted from the canonical partial polar factor of $\mathrm{skew}(X)$.
\end{remark}

\section{Conclusion}

We have introduced two structure-preserving algorithms for computing real
spectral decompositions of real skew-symmetric and orthogonal matrices.
Their key idea is to exploit a skew-symmetric orthogonal polar factor to
reduce the original eigenvalue problem to a Hermitian or unitary problem
of half the size. Numerical experiments indicate that the resulting
methods are accurate and competitive with existing approaches in terms of
running time. Moreover, the dominant computational steps are all well suited 
for parallelization. Developing and evaluating a scalable parallel implementation is
therefore a natural next step.

\appendix

\section{Polar decomposition of block diagonal matrix} The following lemma is an elementary result about the polar decomposition.

\begin{lemma} \label{lem:polar_plus}
Consider a nonsingular matrix $A_1 \in \mathbb{R}^{n_1 \times n_1}$ and a general matrix 
$A_2 \in \mathbb{R}^{n_2 \times n_2}$. If
$
    A_1 \oplus A_2 = P Y
$
is a polar decomposition, then
\[
    P = P_1 \oplus P_2,
    \qquad
    Y = Y_1 \oplus Y_2,
\]
such that $A_1 = P_1 Y_1$ and $A_2 = P_2 Y_2$ are polar decompositions. 
\end{lemma}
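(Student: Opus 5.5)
The plan is to use $Y$, which is uniquely determined, to force the block structure on $P$. From $A = PY$ with $P$ orthogonal we get $A^\top A = Y P^\top P Y = Y^2$. Since $Y \succeq 0$, this gives $Y = (A^\top A)^{1/2}$, the unique positive semidefinite square root. Here $A^\top A = A_1^\top A_1 \oplus A_2^\top A_2$ is block diagonal. The principal square root of a block diagonal positive semidefinite matrix is the direct sum of the blocks' square roots; one can see this by diagonalizing each block separately, or by noting that the square root is a polynomial in the matrix. Hence $Y = Y_1 \oplus Y_2$ with $Y_j = (A_j^\top A_j)^{1/2} \succeq 0$, and $Y_1$ is nonsingular because $A_1$ is.

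Next I would write $P = \left[\begin{smallmatrix} P_{11} & P_{12}\\ P_{21} & P_{22}\end{smallmatrix}\right]$ conformally and compare blocks in $A_1 \oplus A_2 = PY$. The first block column gives $A_1 = P_{11} Y_1$ and $0 = P_{21} Y_1$. Since $Y_1$ is invertible, $P_{21} = 0$ and $P_{11} = A_1 Y_1^{-1}$. Orthogonality of $P$ then kills $P_{12}$: the first block column of $P$ has orthonormal columns, so $P_{11}^\top P_{11} = I_{n_1}$, and $P_{11}$ is a square orthogonal matrix. The block $(1,2)$ of $P^\top P = I$ reads $P_{11}^\top P_{12} + P_{21}^\top P_{22} = P_{11}^\top P_{12} = 0$, hence $P_{12} = 0$. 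Consequently $P = P_{11} \oplus P_{22}$, and $P_{22}^\top P_{22} = I$ follows from the $(2,2)$ block.

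Finally I would set $P_j := P_{jj}$. The second block column gives $A_2 = P_{22} Y_2$ (and $0 = P_{12} Y_2$, which is consistent). So $A_j = P_j Y_j$ with $P_j$ orthogonal and $Y_j$ symmetric positive semidefinite, i.e., both are polar decompositions.

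The only step needing care is the uniqueness of $Y$ and its compatibility with the direct sum. For this I would invoke uniqueness of the positive semidefinite square root, applied blockwise: $Y_1 \oplus Y_2$ is a positive semidefinite square root of $A^\top A$, so it equals $Y$. Nonsingularity of $A_1$ is essential, since it is what gives $P_{21} = 0$; nothing is required of $A_2$, because orthogonality then does the rest.
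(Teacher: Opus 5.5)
Your proof is correct and follows essentially the same route as the paper. Both arguments identify $Y=(A^\top A)^{1/2}=Y_1\oplus Y_2$ by uniqueness of the positive semidefinite factor, then use invertibility of $Y_1$ to get $P_{21}=0$ (the paper writes this as $PE_1=E_1A_1Y_1^{-1}$), and finally let orthogonality force $P_{12}=0$. You merely write out explicitly the ``block upper triangular orthogonal implies block diagonal'' step that the paper states in one line.
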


\begin{proof}
By~\cite[Thm.~7.3.1]{Horn_Johnson_2012}, the uniqueness of the positive semidefinite polar factor implies
\[
Y
=
\bigl((A_1 \oplus A_2)^\top(A_1 \oplus A_2)\bigr)^{1/2} \\
=
(A_1^\top A_1)^{1/2} \oplus (A_2^\top A_2)^{1/2}
=: Y_1 \oplus Y_2.
\]
Setting $E_1 = \Big[ {I_{n_1} \atop 0} \Big] \in \mathbb R^{(n_1+n_2)\times n_1}$ and using that $Y_1$ is nonsingular, one obtains
\[
 P E_1 = P Y E_1 Y_1^{-1} = (A_1 \oplus A_2) E_1 Y_1^{-1} = E_1 A_1 Y_1^{-1}.
\]
Hence, $P$ is block upper triangular and, as an orthogonal matrix, this implies that $P$ is actually block diagonal and can thus be written as $P = P_1 \oplus P_2$ for some 
$P_j\in \mathrm{O}(n_j)$. Comparing the diagonal blocks in
$
    A_1\oplus A_2
    =(P_1\oplus P_2)(Y_1\oplus Y_2)
$
gives $A_j=P_jY_j$.
\end{proof}

\section{Computing a skew-symmetric orthogonal polar factor}\label{sec:app_polar_factor}
Most methods considered in this appendix are based on existing matrix iterations for
the orthogonal polar factor taking the form 
\[
    P_{k+1} = P_k r_k(P_k^\top P_k),
\]
with a rational function $r_k$. This iteration preserves the singular
vectors of $P_k$ and aims at driving the nonzero singular values to one.
Structure preservation by such iterations for the polar decomposition (and the closely related matrix sign function) has been studied more generally in matrix groups and structured matrix classes; see, for
example,~\cite{HMMT04,MR2208338}. \cref{prop:skew-projected-polar-iteration} below identifies a class of iterations that preserves skew-symmetric structure and covers all matrix iterations considered in this section. Roundoff error may introduce unstructured perturbations to the iterates. In this situation, the proposition justifies the use of a skew-symmetric projection step, as it does not further enlarge the norm of the error.

\begin{proposition}
\label{prop:skew-projected-polar-iteration}
For real coefficients $a_k,b_k,c_k$, define
\[
    \mathcal F_k(P)
    :=
    P(a_kI+b_kP^\top P)(I+c_kP^\top P)^{-1},    
\]
assuming that the inverse exists. If $P$ is skew-symmetric, then
$\mathcal F_k(P)$ is also skew-symmetric.

Moreover, let $\widehat Y$ be an approximation to
$Y=\mathcal F_k(P)$ and define
$
    \widehat P_+
    :=
    \mathrm{skew}(\widehat Y).
$
Then, for every unitarily invariant norm,
$
    \|\widehat P_+-Y\|
    \leq
    \|\widehat Y-Y\|.
$
\end{proposition}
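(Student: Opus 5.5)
The proof has two independent parts: structure preservation, and the projection bound.

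\emph{Skew-symmetry of $\mathcal F_k(P)$.} Let $P^\top=-P$ and set $W:=P^\top P=-P^2$, which is symmetric. The matrices $a_kI+b_kW$ and $(I+c_kW)^{-1}$ are rational functions of $P$, so they commute with $P$ and with each other. Call their product $R$; it is symmetric. Then $\mathcal F_k(P)=PR$ with $PR=RP$, and
\[
\mathcal F_k(P)^\top=R^\top P^\top=-RP=-PR=-\mathcal F_k(P).
\]
So $\mathcal F_k(P)$ is skew-symmetric. The only point to state explicitly is that the inverse of a matrix commuting with $P$ again commutes with $P$.

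\emph{The projection bound.} Here we use only that $Y$ is skew-symmetric. Write $E:=\widehat Y-Y$. Since $\mathrm{skew}$ is linear and $\mathrm{skew}(Y)=Y$,
\[
\widehat P_+-Y=\mathrm{skew}(\widehat Y)-Y=\mathrm{skew}(E)=\tfrac12\bigl(E-E^\top\bigr).
\]
The triangle inequality gives $\|\mathrm{skew}(E)\|\le\tfrac12(\|E\|+\|E^\top\|)$. A unitarily invariant norm depends only on the singular values, and $E^\top$ has the same singular values as $E$, so $\|E^\top\|=\|E\|$. Therefore $\|\widehat P_+-Y\|\le\|E\|=\|\widehat Y-Y\|$.

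Neither step is a real obstacle. The one point to state explicitly is the transpose invariance $\|E^\top\|=\|E\|$ of unitarily invariant norms, which follows from the singular value characterization (von Neumann).
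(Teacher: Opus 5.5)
Your proposal is correct and follows the paper's proof essentially step for step. For skew-symmetry you rewrite $P^\top P=-P^2$ and use that the symmetric factor $(a_kI-b_kP^2)(I-c_kP^2)^{-1}$ commutes with $P$. For the bound you reduce $\widehat P_+-Y$ to $\mathrm{skew}(E)$ and combine the triangle inequality with $\|E^\top\|=\|E\|$.
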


\begin{proof}
Since $P^\top=-P$, we have $P^\top P=-P^2$, and therefore
\[
    \mathcal F_k(P)
    =
    P(a_kI-b_kP^2)(I-c_kP^2)^{-1}.
\]
The three factors in this expression commute. Moreover,
$(a_kI-b_kP^2)(I-c_kP^2)^{-1}$ is symmetric. It follows that
\[
    \mathcal F_k(P)^\top
    =
    -(a_kI-b_kP^2)(I-c_kP^2)^{-1}P
    =
    -\mathcal F_k(P).
\]

To prove the second claim, note that $Y^\top=-Y$ implies
$
    \mathrm{skew}(\widehat Y)-Y
    =
    \mathrm{skew}(\widehat Y-Y).
$
Consequently, for $E:=\widehat Y-Y$,
\[
    \|\mathrm{skew}(E)\|
    =
    \frac12\|E-E^\top\|
    \leq
    \frac12\bigl(\|E\|+\|E^\top\|\bigr)
    =
    \|E\|.
\]
\end{proof}

\noindent We compare the following five methods for compting a skopf of $A \in \mathrm{Skew}(2n)$:
\begin{paragraph}{\bf PSVD} Given an SVD 
$A=U\Sigma V^\top$, the matrix $UV^\top$ is the unique skopf in exact arithmetic provided that $A$ is invertible.
To account for the effects of finite precision, we return the projected matrix
$
        P=\mathrm{skew}(UV^\top),
$
    which ensures skew-symmetry but may destroy (numerical) orthogonality.
\end{paragraph}
\begin{paragraph}{\bf Newton--Schulz}
    Starting from
$
        P_0={A} / {\|A\|_\mathrm{F}},
$ apply the projected Newton--Schulz iteration
    \[
        P_{k+1}
        =
        \mathrm{skew}\Bigl(
        \frac12P_k(3I-P_k^\top P_k)
        \Bigr),
        \qquad k=0,1,\ldots.
    \]
    This iteration has regained a lot of attention recently in the context of machine learning for its high parallelizability~\cite{GrishinaSmirnovRakhuba2026,jordan2024muon}.
    It converges quadratically once the singular values are
    sufficiently close to one~\cite{BjorckBowie1971}.
    Global convergence deteriorates very quickly for ill-conditioned $A$, as 
    very small singular
    values approach one only very slowly.
\end{paragraph}
\begin{paragraph}{\bf Halley}
    Starting from the same scaled matrix $P_0$, apply the projected
    Halley iteration
    \[
        P_{k+1}
        =
        \mathrm{skew}\left(
        P_k(3I+P_k^\top P_k)
        (I+3P_k^\top P_k)^{-1}
        \right),
        \qquad k=0,1,\ldots.
    \]
    The iteration has cubic local convergence, but may require many
    iterations for severely ill-conditioned matrices~\cite{Gander1990Polar,
    NakatsukasaBaiGygi2010}. The linear system is solved using a
    Cholesky factorization.
\end{paragraph}
\begin{paragraph}{\bf CDWH}
    The Cholesky-based dynamically weighted Halley method~\cite{NakatsukasaBaiGygi2010} 
    \[
        \widehat P_{k+1}
        =
        \frac{b_k}{c_k}P_k
        +
        \left(a_k-\frac{b_k}{c_k}\right)
        P_k(I+c_kP_k^\top P_k)^{-1},
        \qquad
        P_{k+1}=\mathrm{skew}(\widehat P_{k+1}),
    \]
     where the initialization and the parameters $a_k,b_k,c_k$ are chosen
    as in~\cite{NakatsukasaBaiGygi2010}.
    The inverse  is evaluated using a Cholesky
factorization of $I+c_kP_k^\top P_k$, which can induce a loss of accuracy during the initial iterations involving potentially ill-conditioned iterates.
\end{paragraph}
\begin{paragraph}{\bf QDWH}
    The QR-based dynamically weighted Halley iteration evaluates the
    same rational iteration as CDWH through the identity
    \[
        \widehat P_{k+1}
        =
        P_k(a_kI+b_kP_k^\top P_k)
        (I+c_kP_k^\top P_k)^{-1},
        \qquad
        P_{k+1}=\mathrm{skew}(\widehat P_{k+1}),
    \]
    In the QR-based dynamically
    weighted Halley method, abbreviated QDWH, the rational function is
    evaluated without explicitly forming the inverse. Let
    \[
        \begin{bmatrix}
            \sqrt{c_k}P_k\\
            I
        \end{bmatrix}
        =
        \begin{bmatrix}
            Q_{1,k}\\
            Q_{2,k}
        \end{bmatrix}R_k
    \]
    be a thin QR factorization. Then the algebraically equivalent update
    is
    \[
        \widehat P_{k+1}
        =
        \frac{b_k}{c_k}P_k
        +
        \frac{1}{\sqrt{c_k}}
        \left(a_k-\frac{b_k}{c_k}\right)
        Q_{1,k}Q_{2,k}^\top.
    \]
    The convergence of QDWH converges in a small number of iterations even for highly
    ill-conditioned matrices~\cite{NakatsukasaBaiGygi2010}. It also enjoys excellent numerical robustness; in particular,
    backward stability is ensured when a pivoted QR factorization is used~\cite{NakatsukasaHigham2012}. \new{When $c_k$ falls below a prescribed
    threshold, which is set to $10^3$ in our experiments, the method switches to CDWH.
    Thus, potentially ill-conditioned systems are handled by 
    QR factorizations, while the less expensive Cholesky factorization is used in the better-conditioned regime.}
\end{paragraph}

\new{For every method, the convergence threshold is set to $\|P_k^\top P_k - I\|_\mathrm{F} < 10u\sqrt{n}$, where $u$ is the unit-roundoff. The maximum number of iterations is set to $10^2$. For QDWH, the QR factorizations are computed with the routine \texttt{qr} in Julia. It calls \texttt{geqrt} from LAPACK, i.e., a blocked Householder QR algorithm without pivoting.}

In \cref{tab:polar_performance}, the methods are tested on matrices with different condition numbers:
\begin{itemize}
    \item Distribution 1: $A$ is a random skew-symmetric matrix with the upper triangular part composed of independent entries sampled from the standard normal distribution. The condition number of $A$ tents to grow proportionally with the size $2n$ of the matrix.
    \item Distribution 2: $A$ is a numerically singular/ill-conditioned matrix. The spectral basis is sampled from a Haar-distribution and the singular values are exponentially decreasing from $1$ to $2^{-52}$.
\end{itemize}

\color{black}


\cref{tab:polar_performance} shows that QDWH is the most robust method for computing a skopf.  The accuracy of the method CDWH is affected by the potential ill-conditioning of the linear systems involved in the first iterations.
In the context of massively parallel computations, Newton-Schulz iterations~\cite{BjorckBowie1971} can be better since only matrix-matrix products are required. However, for ill-conditioned matrices, an initial iteration of QDWH is required for Newton-Schulz iterations to converge.

\begin{table}[ht]
    \centering
    \begin{tabular}{||c|c|c|c||}
    \hline
         Method & $\|P^\top P - I\|_\mathrm{F} / \sqrt{2n}$& $\|\mathrm{skew}(P^\top A)\|_\mathrm{F}/\|A\|_\mathrm{F}$&Runtime (s) \\
         
         \hline
         \multicolumn{4}{||c||}{Distribution 1: Random matrix ($\kappa(A) \approx 2780$)}\\
         \hline         
PSVD & 4.6e-15 & 1.3e-15 & 3.5\\ 
NS~\cite{BjorckBowie1971} & 5.4e-16 & 4.9e-16 & 6.7\\
Halley~\cite{Gander1990Polar}  & 1.7e-15& 5.7e-16 & 7.2\\
CDWH~\cite{NakatsukasaBaiGygi2010} & 5.7e-16 & \orange{6.5e-13} \nmark & 3.3\\
QDWH~\cite{NakatsukasaBaiGygi2010} & 5.7e-16 & 4.0e-15 & 4.8\\

\hline
        
         \multicolumn{4}{||c||}{Distribution 2: Ill-conditioned matrix ($\kappa(A) \approx \varepsilon_\mathrm{m}^{-1}$)}\\
         \hline
PSVD & \red{0.18} \xmark  & 1.8e-15 & 2.7\\
NS~\cite{BjorckBowie1971} & 5.4e-16  & 2.1e-15 & \red{17.0} \xmark\\
Halley~\cite{Gander1990Polar}  & 5.9e-16  & 1.6e-15 &\red{ 27.0} \xmark\\
CDWH~\cite{NakatsukasaBaiGygi2010} & 7.0e-16  & \red{0.55} \xmark & 6.2\\
 QDWH~\cite{NakatsukasaBaiGygi2010}&5.7e-16 & 5.5e-15 & 7.3\\

\hline
    \end{tabular}
    \caption{Comparison of the performance of the different algorithms for computing a polar factor. Matrices of size $2000\times 2000$ ($n=1000$) are sampled with different condition numbers. Four BLAS threads are allocated. Red crosses (\xmark) indicate results that would either lead to important loss of accuracy of~\cref{alg:hamiltospectral} or lead to important slow-down. QDWH has no red cross and offers the best trade-off between speed and accuracy of the polar decomposition.}
    \label{tab:polar_performance}
\end{table}

\end{document}